\newcommand{\sji}{\textsf{sji}}
\def\module#1{\mathrm{mod}\text{-}#1}
\def\pv#1{\ensuremath{{\bf#1}}}
\def\Ann#1{\mathrm{Ann}_S(#1)}
\def\inv{^{-1}}
\def\p{\varphi}
\def\J{\mathrel{{\mathscr J}}} 
\def\R{\mathrel{{\mathscr R}}} 
\def\eL{\mathrel{{\mathscr L}}} 
\def\H{\mathrel{{\mathscr H}}} 
\def\e<{\leq _{E}}
\def\til#1{\ensuremath{\widetilde {#1}}}
\def\malce{\mathbin{\hbox{$\bigcirc$\rlap{\kern-9.3pt\raise0,50pt\hbox{$\mathtt{m}$}}}}}
\def\Ind{\ensuremath{{\bf Ind}}}
\def\1sk{^{(1)}}
\def\to{\rightarrow}
\def\Jnotup#1{{#1}^{\not\hskip 1pt\uparrow}}
\def\Hom{\mathrm{Hom}}
\def\Ind{\mathrm{Ind}}
\def\Coind{\mathrm{Coind}}
\def\Thmname{Theorem}
\def\Propname{Proposition}
\def\Lemmaname{Lemma}
\def\Definitionname{Definition}
\newtheorem{Thm}{\Thmname}[section]
\newtheorem{Prop}[Thm]{\Propname}
\newtheorem{Fact}[Thm]{Fact}
\newtheorem{Lemma}[Thm]{\Lemmaname}
{\theoremstyle{definition}
\newtheorem{Def}[Thm]{\Definitionname}}
{\theoremstyle{remark}
\newtheorem{Rmk}[Thm]{Remark}}
\newtheorem{Cor}[Thm]{Corollary}
{\theoremstyle{remark}
\newtheorem{Example}[Thm]{Example}}
\numberwithin{equation}{section}
\title{Representation Theory of Finite Semigroups over Semirings}
\author[Z.~Izhakian]{Zur Izhakian}
\address{Department of Mathematics\\ Bar-Ilan University\\ Ramat-Gan 52900\\ Israel}
\thanks{The first  author was supported in part by the ISF (Ref. No.
448/09)}
\email{zzur@math.biu.ac.il}
\author[J.~Rhodes]{John Rhodes}
\address{Department of Mathematics\\ University of California, Berkeley\\ Berkeley, CA
94720-3840\\ USA}
\email{rhodes@math.berkeley.edu}\email{BlvdBastille@aol.com}
\author[B.~Steinberg]{Benjamin Steinberg}
\address{Carleton University \\
1125 Colonel By Drive\\
Ottawa, Ontario  K1S 5B6 \\
Canada}
\thanks{The third author was supported in part by NSERC}
\email{bsteinbg@math.carleton.ca}
\date{\today}
\keywords{Representations, semirings, semigroups}
\subjclass[2010]{Primary: 16G10, 20M30, 20M25; Secondary: 16Y60, 14T05}
\begin{document}
\begin{abstract}
We develop the representation theory of a finite semigroup over an arbitrary commutative semiring with unit, in particular classifying the irreducible and minimal representations. The results for an arbitrary semiring are as good as the results for a field.  Special attention is paid to the boolean semiring, where we also characterize the simple representations and introduce the beginnings of a character theory.
\end{abstract}
\maketitle

\section{Introduction}
This paper is about the representation theory of finite semigroups over commutative semirings with unit.  See~\cite{qtheor} for a modern presentation of finite semigroup theory, including a theory of semirings influenced by a semigroup perspective.

The importance of semirings in mathematics and theoretical computer science was first recognized by Sch\"utz\-en\-ber\-ger~\cite{Schutzsemiring} in his theory of weighted automata and rational power series~\cite{BerstelReutenauer,Salomaa}. Conway also heavily employed semirings in his approach to automata theory~\cite{Conway}. See~\cite{Polak1,Polak2,Polak3,Polak4} for further applications of semirings to theoretical computer science.  Recently, idempotent semirings have entered into mainstream mathematics because they are at the heart of the relatively new subject of tropical geometry~\cite{Sturmfels,Gathmann,Itenberg,Develin,Litvinov}.  They also play a role in the notion of characteristic one being developed by Connes \textit{et al.}~\cite{Connescharone,F1}.

Before turning to semirings, we discuss the classical case of fields.  Work of Clifford~\cite{Clifford1,Clifford2}, Munn~\cite{Munn1,Munn2} and Ponizovski{\u\i}~\cite{Poni} parameterized the irreducible
representations of a finite semigroup over a field in terms of the irreducible
representations of its maximal subgroups.  The reader is referred to~\cite[Chapter 5]{CP}
for a full account of this work.  Explicit constructions of
the irreducible representations were later obtained independently by
Rhodes and Zalcstein~\cite{RhodesZalc} (which was written in 1968) and by Lallement and
Petrich~\cite{LallePet} in terms of the Sch\"utzenberger representation by
monomial matrices~\cite{Schutzmonomial}; see also~\cite{McAlisterCharacter}.  All of these approaches
make use of Rees's Theorem~\cite{Rees} characterizing $0$-simple semigroups up to
isomorphism and Wedderburn theory.  This viewpoint makes it hard to see how to generalize things to rings, let alone semirings.  So it was once believed that we knew everything about representations over the field of complex numbers and nothing about representations over, say, the boolean semiring.

The purpose of this paper is to show that we essentially know just as much about boolean representations as we do about complex ones.  In particular, we construct all irreducible representations of a finite semigroup $S$ over any commutative semiring with unit modulo the case of groups.  In the case of the boolean semiring we identify the congruence associated to the direct sum of all irreducible representations with one of the congruences from the semilocal theory~\cite[Chapter 4]{qtheor}, generalizing the case of representations over fields~\cite{Rhodeschar,AMSV}.  In addition to classifying irreducible representations, we also describe all the minimal representations.  For the case of boolean representations we are also able to classify the simple representations by showing that they are dual to the minimal ones.  In the boolean setting, we propose a notion of characters.

Here we follow the coordinate-free approach from~\cite{myirreps}, which was used to handle  arbitrary commutative rings, in order to deal with semirings. Basic facts about semirings can be found in~\cite{semirings1,semirings2,semirings} or~\cite[Chapters 8 and 9]{qtheor}.

\section{Preliminaries}

We collect here some basic definitions and facts concerning finite
semigroups that can be found in any of~\cite{Arbib,CP,qtheor}.  Let
$S$ be a (fixed) finite semigroup.  If $e$ is an idempotent, then
$eSe$ is a monoid with
identity $e$; its group of units $G_e$ is called the \emph{maximal
  subgroup} of $S$ at $e$.  Two idempotents $e,f$ are said to be
\emph{isomorphic} if there exist $x\in eSf$ and
$x'\in fSe$ such that $xx'=e, x'x=f$.  In this case one can show that
$eSe$ is isomorphic to $fSf$ as
monoids and hence $G_e\cong G_f$; see Fact~\ref{idempotentsiso}.

If $s\in S$, then $J(s) = S^1sS^1$ is the principal (two-sided) ideal
generated by $s$ (here $S^1$ means $S$ with an adjoined identity).
Following Green~\cite{Green}, two elements $s,t$ of a semigroup $S$
are said to be \emph{$\J$-equivalent} if $J(s)=J(t)$.  In this case
one writes $s\J t$.   In fact, there is a preorder on $S$ given by
$s\leq_{\J} t$ if $J(s)\subseteq J(t)$.  This preorder induces an
ordering on $\J$-classes in the usual way.

\begin{Fact}\label{idempotentsiso}
In a finite semigroup, idempotents $e,f$ are isomorphic if and only if $e\J f$, that is, $SeS=SfS$.
\end{Fact}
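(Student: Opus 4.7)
The plan is to verify the two implications separately. The forward direction is elementary; the reverse one is the substance and invokes finiteness through the finite monoid $fSf$.

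($\Rightarrow$) If $e, f$ are isomorphic via $x \in eSf$ and $x' \in fSe$ with $xx' = e$ and $x'x = f$, then $e = xx' \in (eSf)(fSe) \subseteq SfS$, and symmetrically $f \in SeS$, so $SeS = SfS$.

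($\Leftarrow$) Assume $SeS = SfS$, write $e = \alpha f \beta$ for some $\alpha, \beta \in S^1$, and normalize to $e = afb$ with $a := e\alpha$ and $b := \beta e$ (so $ea = a$ and $be = b$). Set $u = eaf \in eSf$ and $v = fbe \in fSe$. One checks $uv = e$ directly, and that $w := vu$ is an idempotent of $fSf$ with $fw = wf = w$, using $ve = v$. A short calculation also yields $u = euw \in eSw$ and $v = wve \in wSe$ (via $uw = (uv)u = eu = u$ and $wv = v(uv) = ve = v$), so the already-proved forward direction applied to the pair $(e, w)$ gives $SeS = SwS$, whence $SwS = SfS$. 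Thus $f = pwq$ for some $p, q \in S^1$, and with $p' := fpf$, $q' := fqf$ in $M := fSf$, one has $p' w q' = f$ (using $fwf = w$).

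The concluding—and only essentially finite—step is to deduce $w = f$ inside the finite monoid $M$, whose identity is $f$. The relation $p' \cdot (wq') = f$ exhibits a right inverse of $p'$ in $M$; in any finite monoid, a right-invertible element is a two-sided unit (right multiplication by $p'$ is an injection on $M$, hence a bijection, so a left inverse exists and must coincide with $wq'$). Therefore $wq' p' = f$, and multiplying by $w$ on the left,
\[
w = wf = w(wq' p') = w^2 q' p' = wq' p' = f.
\]
Combined with $uv = e$, $u \in eSf$, $v \in fSe$, this exhibits the required isomorphism of $e$ and $f$. The main obstacle is this last step: without finiteness one can only produce an idempotent $w \in fSf$ which is $\J$-equivalent to $f$, rather than $f$ itself.
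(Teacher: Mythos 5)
The paper states this as a \textsc{Fact} without proof, pointing instead to the standard references (\cite{Arbib,CP,qtheor}); there is no in-paper argument to compare against. Your proof is correct, and it is essentially the textbook argument with the $\mathscr D$-machinery unwound into explicit computations.

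A few remarks. Your construction is the standard one: from $e\in SfS$ you manufacture $u\in eSf$ and $v\in fSe$ with $uv=e$; the resulting $w=vu$ is an idempotent in $fSf$ that is isomorphic to $e$, hence $\J$-equivalent to $f$, and the whole problem reduces to showing that a $\J$-class cannot contain two comparable idempotents $w\leq f$ of a finite semigroup (here $w\in fSf$ with $fwf=w$ is exactly $w\leq f$ in the natural order). You conclude this by the finite-monoid fact that a one-sided inverse in $fSf$ is two-sided; in the textbook treatment the same conclusion is usually packaged as ``finite semigroups are stable, hence $\J=\D$'' (or, equivalently, Fact~\ref{dropoutofJ}: $eSe\cap J_e=G_e$), combined with the observation that $\D$-equivalent idempotents are isomorphic. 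Your version avoids naming $\D$ and stability and is entirely self-contained, which is a small gain in elementariness; the textbook version makes visible exactly which structural fact about finite semigroups is being used and is what one would invoke elsewhere in the paper. Both arguments fail at the identical spot for infinite semigroups, and your closing comment correctly identifies this: in general one only gets an idempotent $w\leq f$ with $w\J f$, not $w=f$ (the bicyclic monoid is the usual counterexample). All the small verifications ($u\in eSf$, $v\in fSe$, $uv=e$, $w^2=w$, $uw=u$, $wv=v$, $p'wq'=f$, injectivity of right multiplication by $p'$) check out.
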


An element $s$ of a semigroup $S$ is said to be (von Neumann)
\emph{regular} if $s=sts$ for some $t\in S$.  Each idempotent is of
course regular.

\begin{Fact}\label{regularJclass}
Let $S$ be a finite semigroup and $J$ a $\J$-class of $S$.  Then the
following are equivalent:
\begin{enumerate}
\item $J$ contains an idempotent;
\item $J$ contains a regular element;
\item all elements of $J$ are regular;
\item $J^2\cap J\neq \emptyset$.
\end{enumerate}
\end{Fact}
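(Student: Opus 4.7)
The plan is to close the cycle $(1)\Rightarrow(3)\Rightarrow(2)\Rightarrow(4)\Rightarrow(1)$. The implications $(3)\Rightarrow(2)$ and $(2)\Rightarrow(4)$ are essentially free: a $\J$-class is nonempty, and if $s=sts$ with $s\in J$ then $ts\J s$ (since $ts\jbe s=s(ts)\jbe ts$), so $s=s\cdot ts\in J^{2}\cap J$.

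For $(4)\Rightarrow(1)$ I would start from $a,b\in J$ with $ab\in J$. Since $a\J ab$, I have $a=uabv$ for some $u,v\in S^{1}$, and iterating this substitution yields $a=u^{n}a(bv)^{n}$ for every $n\geq 1$. Finiteness of $S$ guarantees an $n$ for which $e:=(bv)^{n}$ is idempotent; using $e^{2}=e$ the identity then gives $ae=u^{n}a(bv)^{2n}=u^{n}a(bv)^{n}=a$. Hence $a\jbe e$, while $e\jbe b\J a$, so $e\J a$ and $e$ is the desired idempotent in $J$.

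For $(1)\Rightarrow(3)$ I would invoke the standard fact that $\J=\D$ in a finite semigroup. Given an idempotent $e\in J$ and any $s\in J$, since $s$ and $e$ are $\D$-equivalent there exists $s'$ with $s\R s'\eL e$. The $\eL$-relation yields $s'=ue$ and $e=vs'$ for some $u,v\in S^{1}$, which gives the key identity $s'vs'=uevs'=ue\cdot e=s'$. The $\R$-relation yields $s'=sp$ and $s=s'q$ for some $p,q\in S^{1}$. Then $t=pv$ serves as a quasi-inverse of $s$, since $sts=(sp)(vs)=s'(vs)=s'v(s'q)=(s'vs')q=s'q=s$.

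The main obstacle is $(4)\Rightarrow(1)$: one must convert the set-theoretic hypothesis $J^{2}\cap J\neq\emptyset$ into an actual idempotent sitting inside $J$, and this is where finiteness of $S$ enters essentially, both to make $(bv)^{n}$ idempotent for some $n$ and (implicitly, in the other direction) through $\J=\D$. The remaining manipulations with Green's relations are routine syntactic moves once the right elements are named.
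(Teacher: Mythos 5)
The paper does not prove this statement: it is labeled a \emph{Fact} and cited from the standard references \cite{Arbib,CP,qtheor}, so there is no in-text argument to compare against. Judged on its own terms, your proof is correct, and it is essentially the textbook route: $(2)\Rightarrow(4)$ via $ts\J s$ and $s=s\cdot ts$, $(4)\Rightarrow(1)$ via iterating $a=uabv$ and passing to an idempotent power of $bv$ (finiteness used here), and $(1)\Rightarrow(3)$ via $\J=\D$ (finiteness again, through stability) together with the classical Green's-relations computation producing a quasi-inverse $t=pv$. I verified each step: in particular $ae=(u^{n}ae)e=u^{n}ae=a$ does give $a\leq_{\J}e$, and $e=(bv)^{n}\in bS^{1}$ gives $e\leq_{\J}b\J a$, closing the loop $e\J a$; and the chain $sts=(sp)(vs)=s'v(s'q)=(s'vs')q=s'q=s$ is a valid associativity rearrangement.

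One small point worth making explicit, though it is not a gap: the quasi-inverse you exhibit is $t=pv\in S^{1}$, not a priori in $S$. If $t$ happens to be the adjoined identity $1\notin S$, the identity $sts=s$ reads $s^{2}=s$, so $s$ is idempotent and hence regular with witness $s\in S$; if $t\in S$ there is nothing to do. Likewise in $(4)\Rightarrow(1)$ one should note that $bv\in S$ (since $b\in S$ and $v\in S^{1}$), so that the idempotent $e=(bv)^{n}$ really lies in $S$ rather than being the adjoined identity. With these one-line remarks added, the argument is complete.
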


A $\J$-class satisfying the equivalent conditions in
Fact~\ref{regularJclass} is called a \emph{regular} $\J$-class.  The
poset of regular $\J$-classes is denoted $\mathscr U(S)$.   The following standard result from finite semigroup theory will play a role in this paper.

\begin{Fact}\label{dropoutofJ}
Let $S$ be a finite semigroup and $J$ a regular $\J$-class. Let $e\in
J$ be an idempotent.  Then $eSe\cap J=G_e$.
\end{Fact}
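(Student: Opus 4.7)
The plan is to prove the two containments separately. For $G_e \subseteq eSe \cap J$: if $g \in G_e$ with two-sided inverse $h \in eSe$, then $g \in eSe$ by definition, while $e = gh$ gives $e \jbe g$ and $g \in eSe \subseteq SeS$ gives $g \jbe e$, so $g \J e$ and $g \in J$.

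For the reverse inclusion, fix $s \in eSe \cap J$, so $es = se = s$ and $s \J e$; the goal is to produce a two-sided inverse of $s$ inside the monoid $eSe$. Regularity of $J$ (Fact~\ref{regularJclass}) provides some $t_0 \in S$ with $s t_0 s = s$. Setting $t := e t_0 e$ preserves the identity $sts = s$ (using $es = s = se$) and lands $t$ in $eSe$, with $et = te = t$. A quick check from $sts = s$ gives $(st)^2 = (sts)t = st$ and $(ts)^2 = t(sts) = ts$, so $st$ and $ts$ are idempotents sitting inside $eSe$.

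The core step is to conclude $st = e$ and, by symmetry, $ts = e$. From $s = (st)s$ we get $s \R st$, so $st \J s \J e$; meanwhile $st = e \cdot (st)$ and $st = (st) \cdot e$ yield $st \rbe e$ and $st \lbe e$. The stability of the finite semigroup $S$ — namely $x \J xy \Rightarrow x \R xy$ and the dual — upgrades these one-sided comparisons to $st \R e$ and $st \eL e$, so $st \H e$. Since an $\H$-class contains at most one idempotent, $st = e$; the mirror argument yields $ts = e$, so $t$ is a two-sided inverse of $s$ in $eSe$ and $s \in G_e$. The only delicate ingredient is the invocation of stability, which is where finiteness of $S$ genuinely enters; everything else is bookkeeping with the one-sided identities $es = s = se$ and $et = t = te$ to confine all products to $eSe$.
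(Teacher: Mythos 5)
Your proof is correct. The paper records this as a \emph{Fact} without supplying a proof --- it is a standard piece of the Green--Rees structure theory of finite semigroups, cited to the references --- so there is no internal argument to compare against; I can only evaluate yours on its own terms, and it holds up.

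The forward inclusion is routine, as you say. For the reverse inclusion, your route --- take a regular inverse $t_0$ of $s$, localize it to $t = e t_0 e \in eSe$, observe that $st$ and $ts$ are idempotents in $eSe$ that are $\J$-equivalent to $e$, and then invoke stability together with uniqueness of the idempotent in an $\H$-class to force $st = e = ts$ --- is sound, and the appeal to stability is exactly where finiteness of $S$ enters, as you flag. One stylistic remark: a slightly shorter variant avoids constructing $t$ at all. Since $es = s = se$ and $s \J e$, stability applied to the factorizations $s = e\cdot s$ and $s = s\cdot e$ directly gives $e \R s$ and $e \eL s$, hence $s \H e$; and the $\H$-class $H_e$ of the idempotent $e$ is precisely the group of units $G_e$ of $eSe$ by Green's theorem. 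Your argument in effect re-derives the containment $H_e \subseteq G_e$ by manufacturing the inverse by hand, which is perfectly fine, but citing $H_e = G_e$ lets the two stability applications end the proof at once.
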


Let $J$ be a $\J$-class of $S$.  Set $\Jnotup J = \{s\in S\mid J\nsubseteq
J(s)\}$; it is the ideal of all elements of $S$
that are not $\J$-above some (equals any) element of $J$.

The reader is referred to~\cite[Chapter 9]{qtheor} for basic notions concerning
modules\footnote{We note that some authors use the term semimodule for what we call a module.} over semirings.
Let us say that a congruence on a module over a semiring is \emph{proper} if it has more than one equivalence class and that it is \emph{trivial} if the associated partition is into singletons.  Fix a base commutative semiring ring $k$ with unit.

\begin{Def}
If $A$ is a $k$-algebra,
not necessarily unital, then a right $A$-module $M$ with $MA\neq 0$ is said to be:
\begin{enumerate}
 \item \emph{simple} if it has only trivial quotient modules;
\item \emph{minimal} if it contains no proper non-zero submodules;
\item \emph{irreducible} if it is both simple and minimal.
\end{enumerate}
\end{Def}

Over a ring $k$, all three notions coincide.
Observe that minimality amounts to asking that, for all $0\neq
m\in M$, the cyclic module $mA$ is $M$.

 The category of (right) $A$-modules will be denoted $\module A$.
We adopt here the convention that if $A$ is unital, then by $\module A$ we
mean the category of unitary $A$-modules.  The reader should verify
that all functors considered in this paper respect this convention.

\begin{Def}[Radical and socle]\label{radicaldef}
Let us denote by $\mathrm{rad}(M)$ the intersection of all maximal congruences on a module $M$; it is called the \emph{radical} of $M$.  The submodule of $M$ generated by all minimal submodules is denoted $\mathrm{Soc}(M)$ and is called the \emph{socle} of $M$.
\end{Def}

We state and prove Schur's lemma in the semiring context.

\begin{Lemma}[Schur]
Let $M$ and $N$ be $A$-modules.  Then each non-zero homomorphism $\p\colon M\to N$ is:
\begin{enumerate}
 \item injective if $M$ is simple;
\item surjective if $N$ is minimal;
\item an isomorphism if $M$ is simple and $N$ is minimal.
\end{enumerate}
\end{Lemma}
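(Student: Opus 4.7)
The plan is to adapt the classical ring-theoretic proof of Schur's lemma to the semiring setting, replacing kernels of homomorphisms (which are not available as submodules when subtraction is absent) with kernel congruences, and relying on ``image is a submodule'' for the second part.

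For part (1), I would introduce the kernel congruence ${\sim}$ on $M$ defined by $m \sim m'$ iff $\p(m) = \p(m')$. A direct check confirms that ${\sim}$ is a congruence of $A$-modules and that the induced map $M/{\sim} \hookrightarrow N$ is injective. Since $M$ is simple, ${\sim}$ can only be the identity congruence or the universal one. The identity case gives injectivity of $\p$ at once. In the universal case, $\p$ would be constant, and since any module homomorphism preserves $0$, this constant value must be $0_N$, making $\p$ the zero homomorphism and contradicting the hypothesis $\p \neq 0$. For part (2), I would consider the image $\p(M)$, which is a submodule of $N$: it contains $\p(0_M) = 0_N$ and is closed under addition and the $A$-action because $\p$ is a homomorphism. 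Nonvanishing of $\p$ makes $\p(M)$ a nonzero submodule of $N$, so minimality of $N$ forces $\p(M) = N$.

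Part (3) will be immediate from (1) and (2): $\p$ is bijective, and its set-theoretic inverse is routinely verified to respect addition and the $A$-action, hence it is an isomorphism. I expect no genuine obstacle here; the only conceptual shift required is to use kernel congruences in place of kernel submodules (and to remember that ``nonzero'' for a homomorphism rules out the map that collapses $M$ to $0_N$). Once that shift is made, the argument is a transparent transcription of the classical one.
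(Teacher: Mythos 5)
Your proof is correct and takes essentially the same route as the paper: the paper's one-line argument notes that $\p\neq 0$ implies $\p(M)$ is a non-zero submodule of $N$ and $\ker\p$ is a proper congruence on $M$, from which the three claims follow by the definitions of simple and minimal. You have merely unpacked this reasoning, correctly substituting the kernel congruence for the kernel submodule of the ring-theoretic argument.
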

\begin{proof}
Suppose that $\p\neq 0$.  Then $\p(M)$ is a non-zero submodule of $N$ and $\ker \p$ is a proper congruence on $M$, from which the result is trivial.
\end{proof}

The class of minimal modules is closed under quotients.

\begin{Prop}\label{minishered}
Let $M$ be a minimal $A$-module. Then any non-zero quotient module  of $M$ is also minimal.
\end{Prop}
\begin{proof}
Suppose that $M$ is minimal and $\p\colon M\to N$ is a surjective $A$-module homomorphism.  Let $N'$ be a non-zero submodule of $N$.  Then $\p\inv(N)$ is a non-zero submodule of $M$ and so $M=\p\inv (N)$, whence $N=N'$.  Thus $N$ is minimal.
\end{proof}

\section{Construction of the irreducible and minimal modules}
Fix a finite semigroup $S$ and a commutative semiring with unit $k$.  The
semigroup algebra $kS$ need not be unital.  If $M$ is a $kS$-module, then $\Ann M=\{s\in S\mid Ms=0\}$ is an ideal of $S$.
The following definition, due to Munn~\cite{Munn1}, is crucial to semigroup representation
theory.

\begin{Def}[Apex]
A regular $\J$-class $J$ is said to be the \emph{apex} of a $kS$-module $M$
if $\Ann M=\Jnotup J$.
\end{Def}
It is easy to see that $M$ has apex $J$ if and only if $J$ is the
unique $\leq_{\J}$-minimal $\J$-class that does not annihilate $M$.  The notion of apex is closely related to that of lifting $\J$-classes~\cite[Lemma 4.6.10]{qtheor}.  For further background see~\cite[Chapter 4.6]{qtheor}.

Fix an idempotent transversal $E=\{e_J\mid J\in \mathscr U(S)\}$ to
the set $\mathscr U(S)$ of regular $\J$-classes  and set $G_J=
G_{e_J}$.  Let $A_J=kS/k\Jnotup J$.  Notice that the category of
$kS$-modules with apex $J$ can be identified with the full subcategory
of $\module {A_J}$ whose objects are modules $M$ such that
$Me_J\neq 0$.

Our first goal is to show that every minimal module has an apex.  This
result is due independently to Munn and
Ponizovski{\u\i}~\cite{Munn1,Munn2,Poni} in the case of fields.

\begin{Thm}\label{haveapex}
Let $M$ be a minimal $kS$-module.  Then $M$ has an apex.
\end{Thm}
\begin{proof}
Because $MkS\neq 0$, there is a $\leq_{\J}$-minimal $\J$-class $J$ such
that $J\nsubseteq \Ann M$.  Let $I = S^1JS^1$; of course, $I$ is an
ideal of $S$. Since $I\setminus J$ annihilates $M$
by minimality of $J$, it follows $0\neq MkJ=MkI$. From the fact that $I$ is an
ideal of $S$, we may deduce that $MkI$ is a $kS$-submodule and so  by minimality
\begin{equation}\label{apex1}
M= MkI = MkJ.
\end{equation}
Therefore, since $J\Jnotup J\subseteq I\setminus J\subseteq \Ann M$, it follows
from \eqref{apex1} that $\Jnotup J= \Ann M$.  Now if $J$ is not regular,
then Fact~\ref{regularJclass} implies $J^2\subseteq I\setminus J$ and hence $J$
annihilates $M$ by \eqref{apex1}, a contradiction.  Thus $J$ is
regular and is an apex for $M$.
\end{proof}

Now we wish to establish a bijection between irreducible $kS$-modules with apex
$J$ and irreducible $kG_J$-modules.  This relies on a semiring analogue of a well-known result of
Green~\cite[Chapter 6]{Greenpoly}.   Let $A$ be a $k$-algebra and $e$ an idempotent
of $A$. Then $eA$ is an $eAe$-$A$-bimodule and $Ae$ is an $A$-$eAe$-bimodule.  Hence we have a
restriction functor $\mathrm{Res}\colon \module A\to \module {eAe}$ and induction/coinduction
functors $\mathrm{Ind},\mathrm{Coind}\colon \module {eAe}\to \module A$ given by
\begin{equation*}
\mathrm{Ind}(M) = M\otimes _{eAe}eA,\ \mathrm{Res}(M) = Me\ \text{and}\
\mathrm{Coind}(M) = \mathrm{Hom}_{eAe}(Ae,M).
\end{equation*}
Moreover,  $\mathrm{Ind}$ is right exact, $\mathrm{Res}$ is exact,  $\mathrm{Coind}$ is left exact and
$\mathrm{Ind}$ and $\mathrm{Coind}$ are the left and right adjoints of $\mathrm{Res}$, respectively (where a functor is left exact if it preserves finite limits and right exact if it preserves finite colimits).   This follows from observing that
$\mathrm{Hom}_A(eA,M) = Me = M\otimes_A Ae$ and the
usual adjunction between hom and the tensor product. Furthermore, it is
well known that the unit of the first adjunction gives a natural isomorphism
$M\cong \mathrm{Ind}(M)e$ while the counit of the second gives a
natural isomorphism $\mathrm{Coind}(M)e\cong M$.

We need the following lemma relating congruences on $M$ and $Me$.

\begin{Prop}\label{extendcongruence}
Let $M$ be an $A$-module and let $e\in A$ be an idempotent.  Then every congruence $R$ on $Me$ extends to a congruence $R'$ on $M$ by setting $m\mathrel {R'} m'$ if and only if $mae\mathrel R m'ae$ for all $a\in A$.  Moreover, $R'$ is the largest congruence whose restriction to $Me$ is contained in $R$.
\end{Prop}
\begin{proof}
Clearly $R'$ is $k$-module congruence. To see that it is an $A$-module congruence, suppose $b\in A$ and $m\mathrel{R'} m'$.  Then for any $a\in A$, we have $m(bae)
\mathrel R m'(bae)$ and so $mb\mathrel{R'} m'b$ as required.  Finally, observe that if $m,m'\in Me$ and $m\mathrel{R'} m'$, then $m=mee\mathrel R m'ee=m'$.  Conversely, if $m,m'\in Me$ are $R$-related and $a\in A$, then $mae=meae\mathrel R m'eae=m'ae$ and so $m\mathrel {R'} m'$.  Thus $R'$ extends $R$.

Finally, suppose that $R''$ is some other congruence on $M$ whose restriction to $Me$ is contained in $R$ and suppose $m\mathrel{R''} m'$.  Then if $a\in A$ one has $mae\mathrel R m'ae$ and so $m\mathrel{R'} m'$ completing the proof.
\end{proof}

Let $M$ be an $A$-module and define
\[N(M) = \{(m,m')\mid mae=m'ae, \forall a\in A\}\] and $L(M) = MeA$.  It is easily verified using Proposition~\ref{extendcongruence} that $N(M)$ is
the largest congruence on $M$ whose restriction to $Me$ is trivial.  On the other hand, $L(M)$ is the smallest $A$-submodule of $M$ with $L(M)e=Me$.  The congruence class of $0$ under $N(M)$ is  \[K(M) = \{m\in M\mid mae=0, \forall a\in A\}.\]  It is also the largest submodule of $M$ annihilated by $e$.  The constructions $K,L,N$ are functorial.

Observe that if $V$ is an $eAe$-module, then
\begin{equation}\label{Indspan}
L(\mathrm{Ind}(V))=\mathrm{Ind}(V)eA =
V\otimes_{eAe}eAeA = V\otimes _{eAe}eA = \mathrm{Ind}(V).
\end{equation}
Alternatively, one can observe that $\mathrm{Ind}(V)eA$ satisfies the same universal property as $\mathrm{Ind}(V)$.

The analogue of our next result for rings can be found in~\cite[Chapter 6.2]{Greenpoly}.  The proof for semirings is essentially a modification.  We remind the reader that if $M$ is an $A$-module and $N$ is a submodule, then $M/N$ is the quotient of $M$ by the congruence given by $m\equiv m'\bmod N$ if and only if there exist $n,n'\in N$ with $m+n=m'+n'$; it is the largest congruence identifying all elements of $N$ with $0$, cf.~\cite[Chapter 9]{qtheor}.

\begin{Lemma}\label{greenslemma}
Let $A$ be a $k$-algebra and $e$ an idempotent of $A$.
\begin{enumerate}
\item If $M$ is an irreducible (minimal, simple) $A$-module, then $Me=0$ or $Me$ is an irreducible (minimal, simple)  $eAe$-module.
\item If $V$ is a minimal $eAe$-module, then the unique maximal $A$-submodule of $\mathrm{Ind}(V)$ is $K(\mathrm{Ind}(V))$.  Moreover, the minimal $A$-modules $M$ with $Me\cong V$ are (up to isomorphism) the quotients of $\mathrm{Ind}(V)$ by congruences in between congruence modulo $K(\mathrm{Ind}(V))$ and $N(\mathrm{Ind}(V))$.
\item  If $V$ is a simple $eAe$-module, then
  $\mathrm{Ind}(V)$ has unique maximal
  congruence  $N(\mathrm{Ind}(V))$.  Moreover, if $V$ is irreducible, then the quotient
  $\mathrm{Ind}(V)/N(\mathrm{Ind}(V))$ is the unique irreducible
$A$-module $M$ with $Me\cong V$.
\item  If $V$ is a minimal $eAe$-module, then $\mathrm{Coind}(V)$ has unique minimal $A$-submodule $L(\mathrm{Coind}(V))$.  Furthermore, if $V$ is irreducible, then $L(\mathrm{Coind}(V))$ is the unique irreducible $A$-module $M$ with $Me\cong V$.
\end{enumerate}
Consequently, restriction yields a bijection between irreducible
  $A$-modules that are not annihilated by $e$ and irreducible
  $eAe$-modules.
\end{Lemma}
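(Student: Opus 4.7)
The plan is to prove the four parts in order, using Proposition~\ref{extendcongruence} together with the functors $K, L, N$ as the main tools.

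I will handle part (1) by a direct descent argument. For \emph{minimality}: if $0 \ne m \in Me$ then $m = me$, so minimality of $M$ forces $mA = M$, whence $Me = mAe = m \cdot eAe$ is cyclic over $eAe$. For \emph{simplicity}: any proper congruence $R$ on $Me$ extends by Proposition~\ref{extendcongruence} to a congruence $R'$ on $M$ with $R'|_{Me} = R$; properness of $R$ makes $R'$ proper, and simplicity of $M$ then forces $R'$ trivial, whence $R$ is trivial.

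For parts (2) and (3), the governing principle is that submodules and congruences on $\mathrm{Ind}(V)$ are controlled by their restriction to $\mathrm{Ind}(V)e \cong V$. For (2), given a submodule $N \subseteq \mathrm{Ind}(V)$, the $eAe$-submodule $Ne \subseteq V$ is either $0$ (so $N \subseteq K(\mathrm{Ind}(V))$) or $V$ (so $N \supseteq NeA = \mathrm{Ind}(V)eA = \mathrm{Ind}(V)$ by \eqref{Indspan}), since $V$ is minimal. This shows $K(\mathrm{Ind}(V))$ is the unique maximal proper submodule. To classify minimal quotients $M$ with $Me \cong V$, I observe that such an $M$ is a quotient of $\mathrm{Ind}(V)$ via the adjunction, its kernel congruence is at most $N(\mathrm{Ind}(V))$ (because the restriction to $V$ is the given isomorphism), and it contains the congruence modulo $K(\mathrm{Ind}(V))$ (because $K(M)$ is a submodule of the minimal $M$ with $K(M)e = 0$, so $K(M) = 0$); Proposition~\ref{minishered} then forces any such quotient to be minimal. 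For (3), for a proper congruence $R$ on $\mathrm{Ind}(V)$, simplicity of $V$ makes $R|_V$ either trivial or full; the full case, since $V$ generates $\mathrm{Ind}(V)$ as an $A$-module, propagates by congruence to make $R$ the full relation, contradicting properness. Hence $R \le N(\mathrm{Ind}(V))$, so $N(\mathrm{Ind}(V))$ is the unique maximal congruence. For irreducible $V$, $\mathrm{Ind}(V)/N(\mathrm{Ind}(V))$ is then simple (maximality) and minimal by (2), and uniqueness is immediate.

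For (4), the key observation is that every nonzero $A$-submodule $N \subseteq \mathrm{Coind}(V)$ satisfies $Ne \ne 0$: given $0 \ne f \in N$ with $f(ae) \ne 0$ for some $a$, the element $fae \in Ne$ evaluates at $e$ to $f(ae) \ne 0$. Hence $Ne$ is a nonzero $eAe$-submodule of $\mathrm{Coind}(V)e \cong V$, so $Ne = V$ by minimality, giving $N \supseteq NeA = L(\mathrm{Coind}(V))$. Thus $L(\mathrm{Coind}(V))$ lies in every nonzero submodule, making it the unique minimal submodule. For irreducible $V$, I will check that $L(\mathrm{Coind}(V))$ is also simple by running the fullness argument of (3) again: any proper congruence on it must lie below $N(L(\mathrm{Coind}(V)))$, which collapses to the identity since two $eAe$-maps $Ae \to V$ that agree on every $ae$ are equal. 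Irreducibility then follows, and by the uniqueness in (3) one has $L(\mathrm{Coind}(V)) \cong \mathrm{Ind}(V)/N(\mathrm{Ind}(V))$. The concluding bijection then falls out of combining (1) with the uniqueness in (3). The main technical obstacle I expect is the evaluation-at-$e$ trick in (4), which plays the role of the classical idempotent-based socle argument from ring theory; elsewhere the semiring arguments stay close to Green's classical ones, modulo careful handling of congruences via Proposition~\ref{extendcongruence}.
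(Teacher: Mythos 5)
Your proposal is correct and tracks the paper's proof step by step: the same use of Proposition~\ref{extendcongruence} for part~(1), the same submodule-versus-congruence dichotomy via $Le$ and equation~\eqref{Indspan} in parts~(2)--(3), and the same evaluation-at-$e$ trick (essentially the contrapositive of the paper's phrasing) in part~(4). The only genuine deviation is in the uniqueness half of~(4): where the paper runs a fresh adjunction argument ($\Hom_{eAe}(Me,V)\cong\Hom_A(M,\mathrm{Coind}(V))$ nonzero $\Rightarrow$ $M$ embeds in $\mathrm{Coind}(V)$ $\Rightarrow$ $M\cong L(\mathrm{Coind}(V))$), you instead verify that $L(\mathrm{Coind}(V))$ is irreducible with $L(\mathrm{Coind}(V))e\cong V$ and then appeal to the uniqueness already proved in~(3). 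That shortcut is valid and slightly cleaner, though you should make the check $L(\mathrm{Coind}(V))e=\mathrm{Coind}(V)eAe=\mathrm{Coind}(V)e\cong V$ explicit, since (3)'s uniqueness only applies once you know the restricted module is $V$.
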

\begin{proof}
To prove (1), assume $Me\neq 0$.  Suppose first that $M$ is minimal and let $m\in
Me$ be non-zero.  Then $meA = mA=M$, so $meAe=Me$. Thus $Me$ is minimal.  Next suppose that $M$ is simple and let $R$ be a proper congruence on $Me$.  Then $R'$ from Proposition~\ref{extendcongruence} is a proper congruence on $M$ and hence trivial.  But then $R$, which is the restriction of $R'$ to $Me$, is trivial.  Thus $Me$ is simple. The irreducible case follows by combining the minimal and simple cases.

Next we turn to (2).  Suppose that $L$ is a proper submodule of $\mathrm{Ind}(V)$.  Then $Le$ is an $eAe$-submodule of $\mathrm{Ind}(V)e\cong V$.  By minimality of $V$, either $Le=\{0\}$, and so $L\subseteq K(\mathrm{Ind}(V))$, or $Le=\mathrm{Ind}(V)e$.  In the latter case we have $L\supseteq LeA=\mathrm{Ind}(V)eA=\mathrm{Ind}(V)$, where the last equality uses \eqref{Indspan}.  Thus $K(\mathrm{Ind}(V))$ is the unique maximal proper submodule.  It follows that $\mathrm{Ind}(V)/K(\mathrm{Ind}(V))$ is minimal.  Next observe that since $N(\mathrm{Ind}(V))$ has trivial restriction to $\mathrm{Ind}(V)e$, Proposition~\ref{minishered} now implies that, for any congruence $R$ between congruence modulo $K(\mathrm{Ind}(V))$ and $N(\mathrm{Ind}(V))$, one has that $\mathrm{Ind}(V)/R$ is minimal and $[\mathrm{Ind}(V)/R]e\cong V$.  Finally, suppose that $M$ is a minimal $A$-module with $Me\cong V$.  Then the adjunction yields a non-zero homomorphism $\p\colon \mathrm{Ind}(V)\to M$ which is surjective by minimality and restricts to an isomorphism of $eAe$-modules from $\mathrm{Ind}(V)e\to Me$. In particular, it follows that the congruence $\ker \p$ is injective on $\mathrm{Ind}(V)e$ and hence contained in $N(\mathrm{Ind}(V))$.  Also, as $K(\mathrm{Ind}(V))e=0$ and $Me\neq 0$,  minimality implies $\p(K(\mathrm{Ind}(V)))=0$.  This establishes (2).

Now we turn to (3).  Let $V$ be a simple $eAe$-module.  It is immediate from \eqref{Indspan} that
any proper congruence on $\Ind(V)$ must restrict to a proper congruence on $\mathrm{Ind}(V)e\cong V$.  But $V$ is simple and so the only proper congruence on $V$ is the trivial one.  Proposition~\ref{extendcongruence} now implies that $N(\Ind(V))$ is the unique maximal congruence on $\Ind(V)$.  Assume furthermore that $V$ is irreducible.  Then $\Ind(V)/N(\Ind(V))$ is minimal, and hence irreducible, and $[\Ind(V)/N(\Ind(V))]e\cong V$ by (2).
Uniqueness follows also from (2), since if $M$ is irreducible with $Me\cong V$, then in particular $M$ is minimal and so isomorphic to $\Ind(V)/R$ for a certain congruence $R\subseteq N(\Ind(V))$.  But simplicity of $M$ implies that $R=N(\Ind(V))$.  This completes the proof of (3).

Finally, to prove (4) first observe that if $M$ is any non-zero
$A$-submodule of $\mathrm{Coind}(V)$, then $Me\neq 0$.  Indeed,
suppose $Me=0$ and let $\p\in M$. Then, for any $x$ in $Ae$, we have
$\p(x) = (\p xe)(e)=0$ since $\p xe\in Me=0$.  It follows that $M=0$.
Since $\mathrm{Coind}(V)e\cong V$ is a minimal $eAe$-module, it now
follows that if $M$ is a non-zero $A$-submodule of
$\mathrm{Coind}(V)$, then $Me=\mathrm{Coind}(V)e$ and hence
\[L(\mathrm{Coind}(V)) = \mathrm{Coind}(V)eA\subseteq MeA\subseteq
M.\]  This establishes that $L(\mathrm{Coind}(V))$ is the unique
minimal $A$-submodule.

Suppose now that $V$ is irreducible.  We must show that any proper congruence on $L(\mathrm{Coind}(V))$ is trivial.  Since $\mathrm{Coind}(V)e\cong V$ is irreducible, it follows that any proper congruence on $L(\mathrm{Coind}(V))$ is trivial on $L(\mathrm{Coind}(V))e$ and so by Proposition~\ref{extendcongruence} it suffices to show that $N(L(\mathrm{Coind}(V)))$ is the trivial congruence.  So suppose $\p,\psi\in L(\Coind(V))$ are equivalent and let $a\in A$.  Then \[\p(ae) = (\p ae)(e) = (\psi ae)(e) = \psi(ae)\] and hence $\p=\psi$.  This completes the proof $L(\mathrm{Coind(V)})$ that is irreducible.

Since $L(\mathrm{Coind}(V))e =
\mathrm{Coind}(V)eAe = \mathrm{Coind}(V)e\cong V$, it just remains to
prove uniqueness.  Suppose $M$ is an irreducible $A$-module with $Me\cong
V$. Then the existence of a non-zero element of
$\mathrm{Hom}_{eAe}(Me,V)\cong \mathrm{Hom}_A(M,\mathrm{Coind}(V))$
implies that $M$ admits a non-zero homomorphism to
$\mathrm{Coind}(V)$.  Hence $M$ is isomorphic to an irreducible (and hence minimal)
$A$-submodule of $\mathrm{Coind}(V)$. But $L(\mathrm{Coind}(V))$ is
the unique minimal submodule of $\mathrm{Coind}(V)$ and so $M\cong
L(\mathrm{Coind}(V))$, as required.
\end{proof}

We may now etablish an analogue of the Clifford-Munn-Ponizovski{\u\i} theorem for semirings.

\begin{Thm}\label{CMP}
Let $S$ be a finite semigroup, $k$ a commutative semiring with unit
and $E=\{e_J\mid
J\in \mathscr U(S)\}$ be an idempotent transversal to the set $\mathscr
U(S)$ of regular
$\J$-classes of $S$.  Let $G_J$ be the maximal subgroup $G_{e_J}$.
Define functors $\mathrm{Ind}_J,\mathrm{Coind}_J\colon \module kG_J\to \module kS$ by
\begin{align*}
\mathrm{Ind}_J(V)&= V\otimes_{kG_J}e_J(kS/k\Jnotup J)\\
\mathrm{Coind}_J(V)&= \mathrm{Hom}_{kG_J}((kS/k\Jnotup J)e_J,V).
\end{align*}
Then:
\begin{enumerate}
\item If $M$ is an irreducible (minimal) $kS$-module with apex $J$, then $Me_J$ is an
  irreducible (minimal) $kG_J$-module;
\item If $V$ is an irreducible $kG_J$-module, then \[N = \{(u,v)\in
  \mathrm{Ind}_J(V)\mid uae=vae, \forall a\in kS/k\Jnotup J\},\] is the unique maximal
  congruence on $\mathrm{Ind}_J(V)$ and
  $\mathrm{Ind}_J(V)/N$ is the unique irreducible $kS$-module $M$
with apex $J$ such that $Me_J\cong V$;
\item If $V$ is an irreducible $kG_J$-module, then the unique minimal $kS$-submodule of
  $\mathrm{Coind}_J(V)$ is
  $\mathrm{Coind}_J(V)e_JkS$, which moreover is the unique irreducible
  $kS$-module $M$ with apex $J$ such that $Me_J\cong V$.
\item  If $V$ is a minimal $kG_J$-module, then \[K= \{m\in
  \mathrm{Ind}_J(V)\mid mae=0, \forall a\in kS/k\Jnotup J\}\] is the unique maximal
  $kS$-submodule of $\mathrm{Ind}_J(V)$. Moreover, the minimal $kS$-modules $M$ with apex $J$ such that $Me_J\cong V$ are up to isomorphism the quotients $\mathrm{Ind}_J(V)/R$ with (retaining the notation of (2)) $R\subseteq N$ and $K$ contained in a single class of $R$.
\end{enumerate}
Consequently, there is a bijection between the irreducible $kS$-modules and the irreducible $kG_J$-modules, where $J$ runs over
$\mathscr U(S)$.
\end{Thm}
\begin{proof}
Theorem~\ref{haveapex} implies that every minimal $kS$-module $M$ has an
apex. Again setting $A_J=kS/k\Jnotup J$ for a regular $\J$-class $J$, we know
that irreducible $kS$-modules with apex $J$ are in bijection with irreducible
$A_J$-modules $M$ such that $Me_J\neq 0$. It follows directly from
Fact~\ref{dropoutofJ} that \[e_JA_Je_J=ke_JSe_J/ke_J\Jnotup Je_J=kG_J.\]
Lemma~\ref{greenslemma} then yields that irreducible $A_J$-modules not
annihilated by $e_J$, that is, irreducible $kS$-modules with apex $J$, are
in bijection with irreducible $kG_J$-modules in the prescribed manner.  Similarly, the minimal $kS$-modules are as advertised by application of Lemma~\ref{greenslemma}.
\end{proof}

\subsection{A construction in coordinates}
Let us relate the above construction of the irreducible
modules  to the explicit ones found in~\cite{RhodesZalc,LallePet} for fields.   All the
facts about finite semigroups used in this discussion can be found in the
appendix of~\cite{qtheor} or in~\cite{Arbib}. According to
Green~\cite{Green}, two elements $s,t$ of a semigroup are said to be
\emph{$\mathscr R$-equivalent} if they generate the same principal right
ideal.  Dually $s,t$ are said to be \emph{$\eL$-equivalent} if they generate
the same principal left ideal.

Once more let $S$ be a finite semigroup, $k$ a commutative semiring with unit
and $E=\{e_J\mid
J\in \mathscr U(S)\}$ an idempotent transversal to the set $\mathscr
U(S)$ of regular
$\J$-classes of $S$.  Let $G_J$ be the maximal subgroup $G_{e_J}$.
We use $L_J$ and $R_J$ for the $\eL$- and $\R$-classes of $e_J$, respectively.

Here we follow~\cite{rrbg} to give a concrete description of the irreducible $kS$-modules.  If $V$ is an irreducible $kG_J$-module, we use $\til V$ for the corresponding irreducible $kS$-module.  The reader should recall the definition of radical and socle from Definition~\ref{radicaldef}.

\begin{Prop}\label{computeirreducible}
Let $V$ be an irreducible $kG_J$-module.  Then there is a natural isomorphism $\Hom_{kS}(\Ind_J(V),\Coind_J(V))\cong \Hom_{kG_J}(V,V)\neq 0$.  Moreover, if $\p\in  \Hom_{kS}(\Ind_J(V),\Coind_J(V))$ is non-zero, then we have $\ker \p = \mathrm{rad}(\Ind_J(V))$ and $\mathop{\mathrm{Im}}\p = \til V = \mathrm{Soc}(\Coind_J(V))$.
\end{Prop}
\begin{proof}
First note that since $\Ind_J(V),\Coind_J(V)$ are $kS/k\Jnotup J$-modules, the adjunction yields \[\Hom_{kG_J}(V,V)=\Hom_{kG_J}(\Ind_J(V)e_J,V)\cong  \Hom_{kS}(\Ind_J(V),\Coind_J(V)).\]  Suppose now that $\p\colon \Ind_J(V)\to \Coind_J(V)$ is a non-zero homomorphism.
Because $\Ind_J(V)e_JkS=\Ind_J(V)$ by construction, it follows that \[\p(\Ind_J(V)) = \p(\Ind_J(V))e_JkS\subseteq \Coind_J(V)e_JkS=\til V.\]  Since $\p\neq 0$, it follows by irreducibility of $\til V$, that $\p(\Ind_J(V))=\til V$.  As $\Ind_J(V)$ has a unique maximal congruence, we conclude that $\ker \p = \mathrm{rad}(\Ind_J(V))$.
\end{proof}

Observe that as $k$-modules, $kL_J = (kS/k\Jnotup{J})e_J$ and $kR_J = e_JkS/k\Jnotup{J}$ by stability.  Moreover, the corresponding $kG_J$-$kS$-bimodule structure on $kR_J$ is induced by left multiplication by elements of $G_J$ and by the right Sch\"utzenberger representation of $S$ on $R_J$~\cite{CP,Arbib,qtheor}  (i.e., the action of $S$ on $R_J$ by partial functions obtained via restriction of the regular action).  To simplify notation, we will use $kR_J$ and $kL_J$ for the rest of this section.  Then we have
\begin{align*}
\Ind_J(V) &= V\otimes_{kG_J} kR_J \\
\Coind_J(V) &= \Hom_{kG_J}(kL_J,V).
\end{align*}

Multiplication in the semigroup induces a non-zero homomorphism
\begin{equation*}\label{bilinearmap}
C_J\colon kR_J\otimes_{kS} kL_J\cong e_JkS/k\Jnotup{J}\otimes_{kS} (kS/k\Jnotup{J})e_J\to e_J(kS/k\Jnotup{J})e_J \cong kG_J
\end{equation*}
which moreover is a map of $kG_J$-bimodules.

Let $T\subseteq R_J$ be a complete set of representatives of the $\eL$-classes of $J$ and $T'\subseteq L_J$ be a complete set of representatives of the $\R$-classes of $J$.  Then $G_J$ acts freely on the left of $R_J$ and $T$ is a transversal for the orbits and dually $T'$ is a transversal for the orbits of the free action of $G_J$ on the right of $L_J$, see~\cite[Appendix A]{qtheor}.  Thus $kR_J$ is a free left $kG_J$-module with basis $T$ and $kL_J$ is a free right $kG_J$-module with basis $T'$.   It is instructive to verify that the associated matrix representation over $kG_J$ of $S$ on $kR_J$ is the classical right Sch\"utzenberger representation by row monomial matrices and the representation of $S$ on $kL_J$ is the left Sch\"utzenberger representation by column monomial matrices~\cite{CP,qtheor,Arbib,RhodesZalc}.    Hence if $\ell_J=|T|$ and $r_J=|T'|$, then as $kG_J$-modules we have $kR_J\cong kG_J^{\ell_J}$ and $kL_J\cong kG_J^{r_J}$.
 Thus $C_J$ is the bilinear form given by the $\ell_J\times r_J$-matrix (also denoted $C_J$) with
\begin{equation}\label{structurematrix}
(C_J)_{ba} = \begin{cases} \lambda_b\rho_a & \lambda_b\rho_a\in J\\ 0 & \text{otherwise}\end{cases}
\end{equation}
where $\lambda_b\in T$ represents the $\eL$-class $b$ and $\rho_a\in T'$ represents the $\R$-class $a$.  Note that $(C_J)_{ba}\in G_J\cup \{0\}$ by stability and $C_J$ is just the usual sandwich (or structure) matrix of the $\J$-class $J$ coming from the Green-Rees structure theory~\cite{CP,Arbib,qtheor}.  The reader may take \eqref{structurematrix} as the definition of the sandwich matrix if he/she so desires.

Suppose now that $V$ is a $kG_J$-module. We can consider the induced map
\begin{equation*}\label{bilinearmap2}
V\otimes C_J\colon V\otimes_{kG_J}kR_J\otimes_{kS} kL_J\to V\otimes_{kG_J}kG_J\cong V
\end{equation*}
which moreover is non-zero as $v\otimes e_J\otimes e_J\mapsto v$.
From the isomorphism \[\Hom_{kG_J}(V\otimes_{kG_J}kR_J\otimes_{kS} kL_J,V)\cong \Hom_{kS}(V\otimes_{kG_J} kR_J,\Hom_{kG_J}(kL_J,V))\] we obtain a corresponding non-zero $kS$-linear map \[V\otimes C_J\colon \Ind_J(V)\to \Coind_J(V)\] (abusing notation).

Putting together the above discussion with Proposition~\ref{computeirreducible}, we obtain the following result.
\begin{Thm}\label{RZirreducible}
Let $V$ be an irreducible $kG_J$-module.  Then the irreducible $kS$-module corresponding to $V$ is the image of the morphism \[V\otimes C_J\colon \Ind_J(V)\to \Coind_J(V)\] where $C_J$ is the sandwich matrix for $J$, i.e., it is the $k$-span of the rows of $V\otimes C_J$.
\end{Thm}

\begin{Rmk}
Note that since $kR_J$ and $kL_J$ are free $kG_J$-modules with bases $T$ and $T'$ respectively, as $k$-modules, we have $\Ind_J(V) = V^{\ell_J}$ and $\Ind_J(V)=V^{r_J}$; in particular, these two functors are exact.  Moreover, one can easily compute that $V\otimes C_J$ is given via right multiplication by $C_J$ where we view elements of $V^{\ell_J}$ and $V^{r_J}$ as row vectors with entries in $V$.
\end{Rmk}

A semigroup $S$ is called \emph{generalized group mapping} if it has a \emph{distinguished} ($0$-)minimal ideal $I$ on which it acts faithfully on both the left and right~\cite[Chapter 4]{qtheor}.  The distinguished ideal is unique and regular.   The following result generalizes a result of Rhodes and Zalcstein for fields~\cite{RhodesZalc}.  The original proof uses Wedderburn theory, while our proof uses the description of the irreducible modules.  Recall that a module $M$ is \textit{faithful} for $S$ if $ms=mt$ for all $m\in M$ implies $s=t$.

\begin{Prop}\label{GGM}
Let $S$ be a finite semigroup and suppose that the irreducible $kS$-module $M$ is faithful.  Then $S$ is generalized group mapping with distinguished ideal $J$ or $J\cup \{0\}$ where $J$ is the apex of $M$.
\end{Prop}
\begin{proof}
We just handle the case that $S$ has a zero element $0$, as the other case is easier.  Also, by Schur's lemma, $0$ must act either as the identity (in which case $S$ is trivial) or as the zero endomorphism of $M$.  Suppose that we are in the latter case.  By definition of an apex, it is clear that $I=J\cup \{0\}$ is an ideal.  Let $e\in E(J)$ and put $V=Me$.  Then $M$ is a quotient of $\Ind_J(V)$ and a submodule of $\Coind_J(V)$.  It then follows that $S$ acts faithfully on $\Ind_J(V)$ and $\Coind_J(V)$.  Now if $s,t$ act the same on the right of $I$, then they will act the same on the right of $R_e$ and hence on $\Ind_J(V)$.  Thus the action of $S$ on the right of $I$ is faithful.   Similarly, if $s,t$ act the same on the left of $I$, then they act the same on the left of $L_e$ and hence on $\Coind_J(V)$.  This completes the proof that $S$ is generalized group mapping.
\end{proof}

\section{The case of idempotent semirings}
It turns out that we could have restricted our attention to two cases for irreducible modules: rings (already handled in~\cite{myirreps}) and idempotent semirings, as the following observation shows, cf.~\cite{Zsemiring}.

\begin{Prop}\label{additivestructure}
Let $S$ be a finite semigroup and $M$ a simple $kS$-module.  Then the additive structure of $M$ is either an abelian group or a join semilattice with minimum.
\end{Prop}
\begin{proof}
Define a relation on $M$ by $m\equiv n$ if there exists $j,k\in \mathbb N$ so that $jm\in n+M$ and $kn\in m+M$.  It is straightforward to verify that $\equiv$ is a congruence.  Suppose first that $\equiv$ is trivial and let $m\in M$.  Then since $m\equiv m+m$, it follows that $m+m=m$ and so $M$ is a join semilattice with minimum.  If $\equiv$ is not proper, then $m\equiv 0$ and hence $0\in M+m$ and so $M$ is an additive group.
\end{proof}

Let us now consider the join semilattice case.  It turns out that in this case, every representation of a finite group is trivial and so there is exactly on irreducible module over an idempotent semiring associated to any regular $\J$-class of a finite semigroup.

\begin{Prop}\label{nogroups}
Let $G$ be a finite group and $k$ a commutative semiring with unit.  Suppose that $M$ is a minimal $kG$-module with idempotent addition.  Then there is a quotient semiring $k'$ of $k$ so that $M=k'$ with trivial action by $G$.
\end{Prop}
\begin{proof}
By minimality, it follows that $G$ acts by automorphisms of $M$.  Let $0\neq m_0\in M$ and put $m=\sum_{g\in G}m_0g$.  Then $m$ is evidentally fixed by $G$ and so the $k$-span of $m$ is a $kG$-submodule and hence is $M$ by minimality.  If $k'$ is the faithful quotient of $k$ acting on $M$, then $M=k'$ with the trivial $G$-action.
\end{proof}

In particular, if $k$ is a congruence-free commutative idempotent semiring with unit (e.g., the boolean semiring $\mathbb B$), then the trivial action of $G$ on $k$ is the only irreducible $kG$-module for $G$ a group.  Recall that a finite semigroup is \emph{aperiodic} if all its maximal subgroups are trivial.

\begin{Cor}
Let $S$ be a finite semigroup with a faithful irreducible $kS$-module $M$ whose addition is idempotent.  Then $S$ is generalized group mapping with aperiodic distinguished minimal ideal.
\end{Cor}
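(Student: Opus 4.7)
The plan is to reduce aperiodicity to showing the maximal subgroup $G_J$ at the apex is trivial, and then to obtain this from faithfulness via Proposition~\ref{nogroups}. The bulk of the corollary, namely that $S$ is generalized group mapping with distinguished ideal $J$ or $J\cup \{0\}$ (where $J$ is the apex of $M$), is immediately handed to us by Proposition~\ref{GGM}, so the entire problem reduces to proving $G_J = \{e_J\}$.

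To get at $G_J$, I would pass from $M$ to $V = Me_J$. By Theorem~\ref{CMP}(1) (equivalently Lemma~\ref{greenslemma}(1)), $V$ is an irreducible, hence minimal, $kG_J$-module. The key observation is that the semiring addition on $V$ is simply the restriction of the addition on $M$, so it remains idempotent. Proposition~\ref{nogroups} then applies directly and yields that $G_J$ acts trivially on $V$, i.e.\ $vg = v$ for every $v \in V$, $g \in G_J$.

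To convert this into triviality of $G_J$ itself, I would use the faithfulness of $M$. Since $G_J \subseteq e_JSe_J$, every $g\in G_J$ satisfies $g = e_Jg$ in $S$, so for arbitrary $m \in M$ one computes
\[mg = m(e_Jg) = (me_J)g = me_J,\]
where the last equality is the trivial action of $G_J$ on $V$. Thus every $g\in G_J$ acts on $M$ in exactly the same way as $e_J$, and faithfulness forces $g = e_J$ in $S$. Hence $G_J = \{e_J\}$, which makes $J$ (and therefore the distinguished ideal, which is either $J$ or $J\cup\{0\}$) aperiodic.

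The only point that might conceivably cause trouble is this last bootstrap: faithfulness is phrased at the level of the semigroup $S$ rather than of the algebra, so one has to exploit the identity $g = e_J g$ inside $S$ in order to transfer trivial $G_J$-action on the ``small'' module $V$ to identical $S$-actions of $g$ and $e_J$ on all of $M$. Everything else is a direct appeal to results already established earlier in the paper.
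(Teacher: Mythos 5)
Your proof is correct and follows the same route as the paper: apply Proposition~\ref{GGM} to get the generalized group mapping structure, restrict to $Me_J$, use Proposition~\ref{nogroups} to get trivial $G_J$-action there, and then invoke faithfulness. You have merely spelled out the computation $mg = m(e_Jg) = (me_J)g = me_J$ behind the paper's terse ``by faithfulness, we conclude $G_e$ is trivial,'' which is a welcome clarification but not a different argument.
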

\begin{proof}
By Proposition~\ref{GGM}, $S$ is generalized group mapping.  Let $J$ be the apex of $M$ and let $e\in J$ be an idempotent.  Then $Me$ is an irreducible $kG_e$-module, and hence has trivial action of $G_e$ by Proposition~\ref{nogroups}.  By faithfulness, we conclude $G_e$ is trivial.  Since $G_e$ is the maximal subgroup of the distinguished ideal of $S$, this completes the proof.
\end{proof}

The irreducible $\mathbb BS$-modules admit the following description.

\begin{Thm}\label{desribeirreducibles}
Let $S$ be a finite semigroup.  Then the minimal and irreducible $\mathbb BS$-modules are obtained as follows.  Fix a regular $\J$-class $J$ of $S$ with set $A$ of $\R$-classes and $B$ of $\eL$-classes.  Let $C\colon B\times A\to \mathbb B$ be the matrix with $C_{ba}=1$ if and only if the $\H$-class $a\cap b$ contains an idempotent.  Let $\mathbb BB$ be the free module on $B$ and consider the natural right action of $S$ on $\mathbb BB$.  Define a congruence $\equiv$ on $\mathbb BB$ by putting $m\equiv n$ if $ms=0\iff ns=0$ for all $s\in S$. Then:
\begin{enumerate}
\item The congruence $\equiv$ is the unique maximal proper congruence on $\mathbb B$;
\item The module $\mathbb BB$ is minimal, as is every proper quotient of $\mathbb B$;
\item $\mathbb BB/{\equiv}$ is irreducible and
all irreducible modules $\mathbb BS$-modules are of this form for some $\J$-class.
\end{enumerate}
Alternatively, $\mathbb BB/{\equiv}$ can be identified with the $\mathbb B$-span of the rows of the matrix $C$.
\end{Thm}
\begin{proof}
First observe that by Proposition~\ref{nogroups}, the only minimal (and hence irreducible) module for a finite group $G$ over $\mathbb B$ is $\mathbb B$ equipped with the trivial action of $G$.  Thus each $\J$-class provides a unique irreducible $\mathbb BS$-module $M_J$, coming from the trivial representation of the maximal subgroup.

Let us observe that $C$ is the tensor product of the structure matrix of $J$ with the trivial representation of the maximal subgroup of $G$.  It follows that $M_J$ can be identified with the $\mathbb B$-span of the rows of $C$ by Theorem~\ref{RZirreducible}.  This theorem also implies that $M_J\cong\mathbb BB/{\ker C}$.  Let $m,n\in \mathbb BB$.  Then first observe that $mC$ is determined by which entries are $0$.  Now $(mC)_a = \sum_{b\in B}m_bC_{ba}$ and hence is $0$ if and only if $m$ is annihilated by the $\R$-class $a$.  It now follows that $mC=nC$ if and only if, for all $s\in J$, one has $ms=0\iff ns=0$.

Next observe that since $J$ is the apex of $M_J$, it follows easily by minimality of $M_J$ that if $0\neq m\in M_J$, then $mJ\neq 0$.  Thus if $ms\neq 0$ with $s\in S$, we can find $x\in J$ so that $msx\neq 0$.  In particular, it follows by the definition of an apex that $sx\in J$.  We conclude that $m\equiv n$ if and only if $ms=0\iff ns=0$ for all $s\in J$.  This completes the proof of the (3) of the proposition.  The general theory implies that $\mathbb BB$ has a unique maximal congruence and hence it must be $\equiv$.

It remains to prove (2).  First notice that by the general theory, the unique maximal submodule of $\mathbb BB$ is the congruence class of $0$ under the unique maximal congruence.  Therefore, the unique maximal submodule of $\mathbb BB$ is the set of all vectors annihilated by $C$.  But $C$ is a boolean matrix with no zero rows or columns.  Hence, no non-zero vector in $\mathbb BB$ is annihilated by $C$. Thus $\mathbb BB$ is minimal and hence so are all its quotients by Proposition~\ref{minishered}.
\end{proof}

In~\cite[Chapter 4.6]{qtheor}, it is shown that, for each regular $\J$-class $J$ of a finite semigroup $S$, there is a unique congruence $\equiv_J$ on $S$ such that $S/{\equiv_J}$ is generalized group mapping with aperiodic distinguished ideal and $\Jnotup J$ maps to $0$.  The resulting quotient is denoted $\mathsf{AGGM}_J(S)$ and the quotient map is written $\Gamma_J\colon S\to \mathsf{AGGM}_J(S)$.  Consequently, $\mathsf{AGGM}_J(S)$ must then be isomorphic to the image of $S$ under the irreducible representation $S\to \mathrm{End}_{\mathbb B}(M_J)$ constructed in the above proof.  In~\cite[Chapter 4]{qtheor} (see also~\cite{folleyT}), it is shown that the intersection over all regular $\J$-classes $J$ of the congruences $\equiv_J$ is the largest $\J'$-congruence on $S$.  Recall that a congruence $\equiv$ is a \emph{$\J'$-congruence} if $s\equiv t$ and $s,t$ regular implies $s\J t$.  Thus we have proved:

\begin{Thm}\label{largeJ'}
The largest $\J'$-congruence on a finite semigroup is the congruence associated to the direct sum of all irreducible representations of $S$ over $\mathbb B$.
\end{Thm}

The analogous theorems for fields of characteristic $0$ and $p$ can be found in~\cite{Rhodeschar,AMSV}.

A semigroup $S$ is called a \emph{local group} if $eSe$ is a group for each idempotent $e\in S$.  The collection of finite local groups is denoted $\mathbb L\pv G$ and is a \emph{pseudovariety}, i.e., is closed under finite products, subsemigroups and homomorphic images.  If $\pv V$ is a pseudovariety, then the Mal'cev product $\mathbb L\pv G\malce V$ consists of all finite semigroups $S$ admitting a homomorphism $\p\colon S\to T$ with $T\in \pv V$ and $\p\inv(e)\in \mathbb L\pv G$ for each idempotent $e$ of $T$.  Theorem~\ref{largeJ'} in conjunction with~\cite[Theorem 4.6.50]{qtheor} yields our next result.

\begin{Cor}
Let $\pv V$ be a pseudovariety of semigroups and let $S$ be a finite semigroup.  Then $S\in \mathbb L\pv G\malce \pv V$ if and only if the image of $S$ under every irreducible representation over $\mathbb B$ belongs to $\pv V$.
\end{Cor}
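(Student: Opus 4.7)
The plan is to combine Theorem~\ref{largeJ'} with the characterization of the Mal'cev product $\mathbb L\pv G \malce \pv V$ from~\cite[Theorem 4.6.50]{qtheor}, which asserts that a finite semigroup $S$ lies in $\mathbb L\pv G \malce \pv V$ if and only if the quotient of $S$ by its largest $\J'$-congruence belongs to $\pv V$. Once this external input is granted, the corollary reduces to a straightforward bookkeeping argument linking the image of $S$ under the direct sum of all irreducible $\mathbb B$-representations with its images under the individual irreducibles.

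First, since $S$ is finite, the set $\mathscr U(S)$ of regular $\J$-classes is finite and, by Theorem~\ref{desribeirreducibles}, there are only finitely many irreducible $\mathbb BS$-modules up to isomorphism, namely one module $M_J$ per $J \in \mathscr U(S)$. Let $\rho_J\colon S \to \mathrm{End}_{\mathbb B}(M_J)$ denote the corresponding irreducible representation, and let $\Phi\colon S \to \prod_{J} \mathrm{End}_{\mathbb B}(M_J)$ denote its direct sum. By Theorem~\ref{largeJ'} the kernel congruence of $\Phi$ is precisely the largest $\J'$-congruence on $S$, so $\Phi(S)$ is isomorphic to $S$ modulo that congruence; moreover, by the discussion preceding Theorem~\ref{largeJ'}, $\rho_J(S) \cong \mathsf{AGGM}_J(S)$ for each $J$.

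Next I will use the standard closure properties of the pseudovariety $\pv V$. On the one hand, $\Phi(S)$ sits as a subdirect product inside $\prod_{J} \rho_J(S)$, so closure of $\pv V$ under finite direct products and subsemigroups yields $\Phi(S) \in \pv V$ whenever each $\rho_J(S) \in \pv V$. On the other hand, each $\rho_J(S)$ is a homomorphic image of $\Phi(S)$ via the projection onto the $J$-th factor, so closure under homomorphic images gives the converse implication. Thus $\Phi(S) \in \pv V$ if and only if $\rho_J(S) \in \pv V$ for every regular $\J$-class $J$.

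Chaining everything together yields the corollary: by~\cite[Theorem 4.6.50]{qtheor} together with Theorem~\ref{largeJ'}, $S \in \mathbb L\pv G \malce \pv V$ if and only if $\Phi(S) \in \pv V$, which in turn is equivalent to $\rho_J(S) \in \pv V$ for every $J \in \mathscr U(S)$, i.e., to the condition that the image of $S$ under every irreducible representation over $\mathbb B$ lies in $\pv V$. The only substantive obstacle is the appeal to~\cite[Theorem 4.6.50]{qtheor}; everything else is routine once Theorem~\ref{largeJ'} is in hand.
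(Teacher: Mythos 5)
Your proposal is correct and follows the same route as the paper, which simply observes that the corollary is a consequence of Theorem~\ref{largeJ'} combined with~\cite[Theorem 4.6.50]{qtheor}. You have merely spelled out the routine bookkeeping (subdirect product decomposition of $\Phi(S)$ inside $\prod_J \rho_J(S)$ and the closure of $\pv V$ under $H$, $S$, $P_{\mathrm{fin}}$) that the paper leaves implicit.
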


\subsection{Duality and simple boolean modules}
If $M$ is a finite join semilattice with identity, then it is automatically a complete lattice.   The dual semilattice $M^{op}$ is $M$ with the reverse ordering.  It can be identified with the collection of functionals $f\colon M\to \mathbb B$ with pointwise operations.  Indeed, it is well known~\cite[Chapter 9]{qtheor} that the functionals are given by choosing $m\in M$ and defining \[\p_m(n) = \begin{cases} 0 & n\leq m \\ 1 & n\nleq m\end{cases}\] and so $m\leq k$ if and only if $\p_k\leq \p_m$.

From now on we identify $M^{op}$ with the space of functionals.
It will be convenient to use the following boolean analogue of the Stone-Weierstrass theorem.

\begin{Prop}\label{stoneweier}
Let $M$ be a finite join semilattice with identity.  Then a subsemilattice with identity $N\subseteq M^{op}$ is equal to $M^{op}$ if and only if it separates the points of $M$.
\end{Prop}
\begin{proof}
By the above construction of the functionals, it is clear that $M^{op}$ separates the points of $M$.  Suppose that $N$ is a proper subsemilattice of $M$ with identity and let \[M'=\{m\mid \p_m\in N\}.\]  Then $M'$ is a proper meet-semilattice containing the top of $M$.  Let $m$ be a maximal element of $M$ that does not belong to $M'$.  Then the set of elements strictly above $m$ is non-empty and belongs to $M'$. It therefore has a meet $m'$ in $M'$, which must be the unique cover of $m$.  Suppose that $f\in N$.  We claim $f(m)=f(m')$.  Indeed, if $f=\p_k$ with $k\in M'$, then we have two cases.  If $m\leq k$, then since $m<k$ we have $m'\leq k$ and so $f(m)=0=f(m')$.  If $m\nleq k$, then certainly $m'\nleq k$ and so $f(m)=1=f(m')$.  It follows that $N$ does not separate points of $M$.
\end{proof}

Suppose now that $M$ is a finite right-$\mathbb BS$ module for a finite semigroup $S$.  Then $M^{op}$ is naturally a left $\mathbb BS$-module by putting $sf(m) = f(ms)$.

\begin{Thm}[Duality]
Let $S$ be a finite semigroup and let $M$ be a finite right $\mathbb BS$-module.  Then $M$ is simple (minimal) if and only if $M^{op}$ is minimal (simple).
\end{Thm}
\begin{proof}
Since $(M^{op})^{op}\cong M$, it suffices to show that $M$ is simple if and only if $M^{op}$ is minimal.  Suppose first that $M$ is simple and let $N$ be a non-zero $\mathbb BS$-submodule of $M^{op}$.  Define a congruence $\equiv$ on $M$ by $m\equiv m'$ if $f(m)=f(m')$ for all $f\in N$.  This is a congruence because $m\equiv m'$ and $s\in S$ implies that, for all $f\in N$, we have $f(ms) = sf(m)=sf(m')=f(m's)$ as $sf\in N$.  Thus $ms\equiv m's$ and so $\equiv$ is a congruence.  Since $N$ contains a non-zero functional $f$, it follows that $\equiv$ is a proper congruence.  Thus by simplicity of $M$, it follows that $\equiv$ is the trivial congruence.  Thus the elements of $N$ separate points and so $N=M^{op}$ by Proposition~\ref{stoneweier}. It follows that $M^{op}$ is minimal.

Conversely, suppose that $M^{op}$ is minimal and let $\equiv$ be a non-trivial congruence on $M$.  Let $N=\{f\in M^{op}\mid {\equiv}\subseteq \ker f\}$.  Then $N$ is a $\mathbb BS$-submodule of $M^{op}$.  Indeed, trivially $0\in N$. If $f,g\in N$ and $m\equiv m'$, then $(f+g)(m) = f(m)+g(m)= f(m')+g(m') = (f+g)(m')$.  Finally, if $f\in N$,  $s\in S$ and $m\equiv m'$, then  $ms\equiv m's$ and so $sf(m) = f(ms)=f(m's)=sf(m')$.  Thus $sf\in N$.  Since $M^{op}$ separates points and $\equiv$ is non-trivial, we conclude that $N\neq M^{op}$ and hence $N=0$ by minimality.  Suppose $\equiv$ is proper.  Then $M/{\equiv}\neq 0$ and has enough functionals to separate points.  Hence there is a non-zero functional $f$ on $M$ with ${\equiv}\subseteq \ker f$, a contradiction.  Thus $\equiv$ is not proper.  This completes the proof that $M$ is simple.
\end{proof}

It follows that in principle, we can construct the simple $\mathbb BS$-modules by dualizing the minimal left $\mathbb BS$-modules, which are constructed in Theorem~\ref{desribeirreducibles}.

\subsection{Characters}
In this subsection we propose a definition of the character of a boolean representation and also the notion of a generalized character.  These results are preliminary and will be expanded on in a future paper.

Let $M$ be a finite lattice, which we view as a $\mathbb B$-module via its join (and hence we utilize additive notation for the join).

\begin{Def}[\sji]
An element $m\in M$ is said to be \emph{strictly join irreducible} (or \sji) if $m\neq 0$ and $m=m_1+m_2$ implies $m=m_1$ or $m=m_2$.  See~\cite[Chapter 6.1.2]{qtheor}.
\end{Def}

 Let us denote by $\min M$ the set of \sji\ elements: it is the the unique minimal spanning set of $M$ as a $\mathbb B$-module. In particular, $M$ is a free $\mathbb B$-module if and only if $\min M$ is a basis.  Let us say that a decomposition
\begin{equation}\label{irredundant}
m=\sum_{x\in \min M}c_xx
\end{equation}
is \emph{irredundant} if changing any coefficient $c_x$ from a $1$ to a $0$ results in a strictly smaller element of $M$. In other words, an irredundant decomposition is one of the form $m=\sum_{x\in X} x$ with $X\subseteq \min M$ and such that for no proper subset $Y\subsetneq X$ is $m=\sum_{y\in Y} y$.  If $M$ is free, then any decomposition of an element $m$ is irredundant and in particular $x\in \min M$ appears in an irredundant decomposition of $m$ if and only if $x\leq m$.

\begin{Example}
Let $M$ be the span of $B=\{(1,0,0),(1,1,0),(0,1,1)\}$ in $\mathbb B^3$.  Then $\min M=B$.  Now one checks
\begin{align*}
(1,1,1) &= (1,0,0)+(0,1,1)\\
(1,1,1) &= (1,1,0)+(0,1,1)
\end{align*}
and these are all the irredundant decompositions of $(1,1,1)$.
\end{Example}

Suppose that $S$ is a finite semigroup and $M$ is a module for $S$ over a field $k$.  Let $\chi$ be the character of $M$.  Then if $B$ is a basis for $M$, one has that \[\chi(s) = \sum_{b\in B}c_{bb}\] where \[bs = \sum_{b'\in B}c_{bb'}b'.\]  In other words, it sums over all $b\in B$ the multiplicity of $b$ in the decomposition of $bs$ with respect to the basis $B$ (where the multiplicity is taken in the field $k$).  The problem with extending this idea to boolean case is the non-uniqueness of irredundant decompositions.  So instead we try and minimize over all decompositions.  With this in mind, we proceed to define the character of a $\mathbb BS$-module in two steps.

\begin{Def}[Character of a boolean matrix representation]
Suppose that $\p\colon S\to M_n(\mathbb B)$ is a matrix representation.  Let $B$ be the basis of $\mathbb B^n$.  Then define  \[\chi_{\p}(s) = \left|\{b\in B|bs\geq b\}\right|.\]  Equivalently, $\chi_{\p}(s)$ is the trace $\mathrm{tr}(\p(s))$ where we view $\p(s)$ as a zero-one matrix over $\mathbb C$.  If $M$ is the corresponding $\mathbb BS$-module, then we also use the notation $\chi_{M}$ for its character.
\end{Def}

Recall~\cite{berstelperrinreutenauer} that a boolean representation $\p\colon S\to M_n(\mathbb B)$ is \emph{unambiguous} if the product $\p(s)\p(t)$, viewed as matrices over $\mathbb C$, coincides with $\p(st)$. Unambiguous representations play a key role in the theory of rational codes~\cite{berstelperrinreutenauer}. We shall say that a $\mathbb BS$-module $M$ is \emph{unambiguous} if it is a free $\mathbb B$-module and the corresponding matrix representation is unambiguous. For instance, if $S$ acts on a finite set $X$, then the $\mathbb BS$-module $\mathbb BX$ is unambiguous.  From the definition, it is immediate that the character of an unambiguous module is a complex character of the semigroup.

Next suppose that $M$ is a finite $\mathbb BS$-module (with $S$ a finite semigroup).  Then there is a natural surjective $\mathbb B$-linear map $\pi\colon \mathbb B (\min M)\to M$ induced by the identity map on $\min M$.  Associated to each set-theoretic section $\sigma \colon M\to \mathbb B(\min M)$ of $\pi$ with $\sigma|_{\min M}=1_{\min M}$ is a $\mathbb BS$-module structure $M_{\sigma}$ on $\mathbb B (\min M)$ defined by $sx = \sigma(sx)$ for $s\in S$ and $x\in \min M$.  Moreover, notice that if $S$ has an identity $1$ and $M$ is unitary, then $1x=\sigma(1x)=\sigma(x)=x$ and so $M_{\sigma}$ is unitary.   In any case, one has that $\pi\colon M_{\sigma}\to M$ is a surjective morphism of $\mathbb BS$-modules.

\begin{Def}[Min character]
Let $S$  be a finite semigroup and $M$ a finite $\mathbb BS$-module.  Then the \emph{min character} $\chi_M\colon S\to \mathbb N$ is defined by \[\chi_M(s) = \min\{\chi_{M_{\sigma}}(s)\}\] where $\sigma$ runs over all set-theoretic sections of $\pi\colon \mathbb B(\min M)\to M$ with $\sigma|_{\min M}=1_{\min M}$.
\end{Def}

In more concrete terms, to compute $\chi_M(s)$, one first fixes an irredundant decomposition of each element of $M$.  Then one counts how many $x\in \min X$ appear in the chosen irredundant decomposition of $xs$.  Then one minimizes this quantity over all choices made.  Of course, if $M$ is a $\mathbb BS$-module that is free as a $\mathbb B$-module, then clearly the two notions of the character of $M$ coincide.

\begin{Example}
Let $M$ be the span of $B=\{(1,0,0),(1,1,0),(0,1,1)\}$ in $\mathbb B^3$.  The remaining elements of $M$ are $(0,0,0)$ and $(1,1,1)$.   As observed earlier $\min M=B$.  Any admissible section of $\pi$ must be the identity on all elements of $M$ except $(1,1,1)$, which has three lifts:
\begin{align*}
(1,1,1) &= (1,0,0)+(0,1,1)\\
(1,1,1) &= (1,1,0)+(0,1,1)\\
(1,1,1) &= (1,0,0)+(1,1,0)+(0,1,1)
\end{align*}
and so there are three corresponding sections $\sigma_1,\sigma_2,\sigma_3$.
Let $S$ be the two-element semilattice $\{1,e\}$ and let $S$ act on $M$ by having $1$ act as the identity and $e$ act via the map sending all non-zero elements to $(1,1,1)$ (and of course preserving zero).  Then $M$ is a $\mathbb BS$-module.  Let us compute $\chi_M(e)$.  It is easy to compute $\chi_{M_{\sigma_1}}(e) = 2$, $\chi_{M_{\sigma_2}}(e) = 2$ and $\chi_{M_{\sigma_3}}(e) = 3$.  Thus $\chi_M(e)=2$.  Notice that $\sigma_3$ corresponds to a redundant decomposition and hence has to be eliminated when taking the minimum.

Notice that if we were working over $\mathbb C$, then $B$ would be a basis for $\mathbb C$ and if $e$ took each basis vector to $(1,1,1)$, then since $(1,1,1) = (1,0,0)+(0,1,1)$ we would also get $2$ as being the character value on $e$.
\end{Example}

Next we discuss the notion of a generalized character.  Let $M$ be a finite $\mathbb BS$-module and $\min M\subseteq D\subseteq M\setminus \{0\}$.  Then we define the \emph{generalized character} $\psi_{M,D}\colon S\to \mathbb N$ by \[\psi_{M,D}(s) = \left|\{m\in D\mid ms=m\}\right|.\]  For example, if $S$ acts on the finite set $X$, then the generalized character $\psi_{\mathbb BX,X}$ is the complex character of the $\mathbb CS$-module $\mathbb CX$ and also coincides with the min character introduced above.  Let us define the \emph{generalized character spectrum} $\mathrm{cspec}(M)$ to be the set of all $\min M\subseteq D\subseteq M\setminus \{0\}$ such that $\psi_{M,D}$ is a complex character of $S$.

\begin{Prop}\label{specnonempty}
Let $M$ be a non-zero finite $\mathbb BS$-module.  Let $\min M\subseteq D\subseteq M\setminus \{0\}$ be a subset such that $d\in D$ implies $ds\in D\cup \{0\}$ for all $s\in S$. Then $D\in \mathrm{cspec}(M)$. In particular, $M\setminus \{0\}\in \mathrm{cspec}(M)$.
\end{Prop}
\begin{proof}
Let us write for the moment $\theta$ for the zero of $M$ and put $D'=D\cup \{\theta\}$.  Then $S$ acts by total functions on $D'$.    Thus $\mathbb CD'$ is a finite dimensional $\mathbb CS$-module in a natural way and $\mathbb C\theta$ is a $\mathbb CS$-submodule.  Consider the $\mathbb CS$-module $V=\mathbb CD/\mathbb C\theta$.  It is easy to see that $\psi_{M,D}$ is precisely the character of $V$.  This completes the proof.
\end{proof}

Recall that if $S$ is a semigroup and $R$ is an $\R$-class the Sch\"utzenberger representation of $S$ on $R$ is the actions of $S$ on $R$ by partial transformations given by $r\cdot s=rs$ if $rs\in R$ and otherwise is undefined (where $r\in R$ and $s\in S$).  One can turn $\mathbb CR$ into a $\mathbb CS$-module as follows.  First let $S$ act on $R\cup \{\square\}$ by sending all undefined products to $\square$ and demanding $\square s=\square$ for all $s\in S$.  Then consider the $\mathbb CS$-module $\mathbb C(R\cup \{\square\})/\mathbb C\square$ (which as a vector space is isomorphic to $\mathbb CR$).

\begin{Thm}
Let $S$ be a finite semigroup and $M_J$ an irreducible $\mathbb BS$-module where we retain the notation of Theorem~\ref{desribeirreducibles}.  Then there is subset $\min M\subseteq D\subseteq M\setminus \{0\}$ such that the generalized character $\psi_{M,D}$ is the complex character of $S$ obtained by lifting the right Sch\"utzenberger representation of $\mathsf {AGGM}_J(S)$ on an $\R$-class of its distinguished $\J$-class via the projection $\Gamma_J\colon S\to \mathsf{AGGM}_J(S)$.
\end{Thm}
\begin{proof}
Without loss of generality, we may assume that the action of $S$ on $M_J$ is faithful and hence $S=\mathsf{AGGM}_J(S)$. In this case, since $J$ is aperiodic, we may identify the action of $S$ on $B$ by partial transformations with the Sch\"utzenberger representation.  Let $[b]$ denote the equivalence class of $b\in B$ under the congruence $\equiv$ on $\mathbb B$ from Theorem~\ref{desribeirreducibles}.  Then $D=\{[b]\mid b\in B\}$ satisfies the conditions of Proposition~\ref{specnonempty}.  If we can show that the map $b\mapsto [b]$ is injective, then the proof of Proposition~\ref{specnonempty} will imply that $\psi_{M_J,D}$ is the complex character of the Sch\"utzenberger representation.    Suppose that $[b]=[b']$ with $b\neq b'$.  Let $a$ be an $\R$-class of $J$ and choose $s\in a\cap b$ and $s'\in a\cap b'$.  Then, for $x\in B$, one has $[x]s =[b]$ if $C_{xa}=1$ and is otherwise $0$ and $[x]s' = [b']$ if $C_{xa}=1$ and is otherwise $0$.  Thus $s$ and $t$ act the same on $M_J$ and so the action of $S$ on $M_J$ is not faithful, a contradiction.  This completes the proof.
\end{proof}

\section{Density}
This section relates our work with that of Zumbr{\"a}gel~\cite{Zsemiring}. In particular, we look at irreducible modules for semirings and discuss an application to semigroups.  Of course, any irreducible module for a semiring $R$ is an irreducible module for its underlying multiplicative semigroup, but the converse is false.  So in principle if $R$ is a finite semiring, then we can use our results to understand its irreducible representations as a semiring.  One just has to determine which irreducible representations preserve the additive structure.  However, since we are not assuming in general that the semirings in question are finite and also because proofs can become shorter and coordinate-free by taking advantage of the additive structure, we do not treat the representation theory of semirings as a special case of the representation theory of semigroups.

A semiring $R$ is called \emph{primitive} if it has a faithful irreducible module $M$.  Let $D$ be a division ring and $M$ a left vector space over $D$.  Then a subring $R\subseteq \mathrm{End}_D(M)$ is said to be \emph{dense} if, for any pair of $k$-tuples $(m_1,\ldots, m_k)$ and $(m_1',\ldots, m_k')$ of linearly independent elements of $M$, there is an element $r\in R$ with $(m_1r,\ldots, m_nr)=(m_1',\ldots, m_k')$.  If $R$ is a primitive ring with faithful irreducible module $M$, then $\mathrm{End}_R(M)$ is a division ring $D$ by Schur's lemma and so $R\subseteq \mathrm{End}_D(M)$.  Jacobson's density theorem shows that $R$ must be a dense subring.  Zumbr\"agal~\cite{Zsemiring} proved an analogous result for finite primitive semirings with idempotent addition, although he stated it only under the hypothesis that $R$ is \emph{congruence-simple}.  In fact, for finite semirings with idempotent addition, being primitive implies being congruence-simple.  For finite rings, primitivity is also equivalent to simplicity since a finite primitive ring $R$ will have a faithful finite irreducible module $M$ and hence $D$ above will be a finite division ring and thus a field by a theorem of Wedderburn.  But then $R$ will be a finite dimensional algebra over a field with a faithful irreducible module and hence by Wedderburn-Artin theory is simple.

Because Zumbr\"agal does not state his result~\cite{Zsemiring} in full generality, we reproduce it for the reader's convenience.  In what follows we assume that $M$ is a join semilattice with minimum and maximum.  For example any finite join semilattice with minimum also has a maximum.  We shall always denote the maximum by $\infty$.  Following  Zumbr\"agal~\cite{Zsemiring}, given $a,b\in M$, define \[xe_{a,b} = \begin{cases} 0 & x\leq a\\ b & \text{else.}\end{cases}\]  One can check that the $e_{a,b}\in \mathrm{End}(M)$ and that $\{e_{a,b}\mid a,b\in M\}\cup \{0\}$ is a subsemigroup $E$ since \[e_{a,b}e_{c,d} =\begin{cases} 0 & b\leq c\\ e_{a,d} & b\nleq c.\end{cases}\]  It is shown below that $M$ is an irreducible module for $E$. Zumbr\"agal defines $R\subseteq \mathrm{End}(M)$ to be \emph{dense} if it contains all the $e_{a,b}$ with $a,b\in M$.

\begin{Prop}\label{simpleimpliesprimitive}
Let $M$ be a join semilattice with minimum and let $E$ be as above.  Then $M$ is an irreducible $\mathbb BE$-module.  Thus any dense subsemiring of $\mathrm{End}(M)$ is primitive.
\end{Prop}
\begin{proof}
First observe that $M$ is minimal.  Indeed, if $0\neq m,n\in M$ then $me_{0,n}=n$.  Next suppose that $\equiv$ is a non-trivial semilattice congruence on $M$.  To show that $\equiv$ is not proper, suppose $m\equiv n$ with $m\neq 0$.  Then without loss of generality we may assume $m\nleq n$.   Let $a\in M$.  Then $me_{m,a} = 0$ and $me_{n,a}=a$.  Thus $0\equiv a$, completing the proof since $a$ was arbitrary.
\end{proof}

The following is~\cite[Theorem 2.3]{Zsemiring}.

\begin{Thm}[Zumbr\"agal]
Let $M$ be a join semilattice with minimum and let $R\subseteq \mathrm{End}(M)$ be dense.  Then $R$ is congruence-simple.
\end{Thm}

The next result is a strengthening of the statement of~\cite[Proposition 3.13]{Zsemiring}, but the proof is the same.  The reader should compare with Theorem~\ref{desribeirreducibles}.

\begin{Thm}\label{primitivegivesdense}
Let $R$ be an idempotent semiring and $M$ a faithful irreducible $R$-module with maximum.  Then $R$ is a dense subring of $\mathrm{End}(M)$.
\end{Thm}
\begin{proof}
Define a congruence on $M$ by $m\equiv n$ if $mr=0\iff nr=0$ for all $r\in R$.  This is easily verified to be a congruence.  Moreover, it cannot be universal since $m\equiv 0$ implies $mR=0$.  Thus it is the trivial congruence. Let $I_m$ denote the annihilator in $R$ of $m$.  It is a right ideal.

First note that $e_{\infty,b}=0$ and so belongs to $R$.  So fix $\infty\neq a\in M$.  We first show $e_{a,\infty}\in R$.    Suppose $x\nleq a$.  We claim $I_a$ does not annihilate $x$.  Indeed, if $I_a$ annihilates $x$, then since $(x+a)r=0$ if and only if $xr=0=ar$, it follows that $x+a\equiv a$ and so $x\leq x+a=a$.  Now $xI_a$ is then a non-zero submodule of $M$ and so $xI_a=M$.   Thus we can find $r_x\in I_a$ so that $xr_x=\infty$.  Let \[s=\sum_{x\nleq a}r_x\in I_a.\]  Then if $x\leq a$, one has $xs\leq as=0$, whereas if $x\nleq a$, then $xs\geq xr_x=\infty$ and so $s=e_{a,\infty}$.

Now let $\infty\neq b\in M$.  Then by irreducibility, $\infty R=M$ and so $b=\infty r$ for some $r\in R$.  Then, $e_{a,\infty}r = e_{a,\infty r} = e_{a,b}$ and so $R$ is dense, as required.
\end{proof}

As a consequence, we obtain the following result of~\cite{Zsemiring} (wherein the equivalence with the first item is not stated explicitly).

\begin{Cor}[Zumbr\"agal]
Let $R$ be a finite idempotent semiring with $R^2\neq 0$.  Then the following are equivalent:
\begin{enumerate}
\item $R$ is primitive;
 \item $R$ is isomorphic to a dense subsemiring of $\mathrm{End}(M)$ for some finite semilattice $M$ with minimum;
\item $R$ is congruence-free.
\end{enumerate}
\end{Cor}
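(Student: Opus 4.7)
The plan is as follows. The equivalence (2) $\Leftrightarrow$ (3) is already present in~\cite{Zsemiring}: one direction is Zumbr\"agal's density theorem quoted immediately before Theorem~\ref{primitivegivesdense}, and the other is the structure theorem for finite congruence-simple idempotent semirings from~\cite{Zsemiring}. So the task reduces to establishing (1) $\Leftrightarrow$ (2).

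For (2) $\Rightarrow$ (1), I would observe that density of $R$ in $\mathrm{End}(M)$ forces $R$ to contain every $e_{a,b}$, and hence to contain both the subsemigroup $E=\{e_{a,b}\mid a,b\in M\}\cup\{0\}$ and the subsemiring generated by $E$ inside $\mathrm{End}(M)$, i.e., $\mathbb{B}E$. Proposition~\ref{simpleimpliesprimitive} asserts that $M$ is an irreducible $\mathbb{B}E$-module. Since any $R$-submodule of $M$ is automatically a $\mathbb{B}E$-submodule, and any $R$-module congruence is a $\mathbb{B}E$-congruence, enlarging the acting semiring from $\mathbb{B}E$ to $R$ cannot destroy irreducibility. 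Faithfulness of the $R$-action is immediate from $R\subseteq\mathrm{End}(M)$, so $R$ is primitive.

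For (1) $\Rightarrow$ (2), I would take a faithful irreducible $R$-module $M$ and verify that it meets the hypotheses of Theorem~\ref{primitivegivesdense}, namely that it is a finite join semilattice with minimum and maximum. Finiteness is immediate: by minimality $M=mR$ for any non-zero $m$, and $R$ is finite. For additive idempotence, note that $R$ being idempotent gives $mr+mr=m(r+r)=mr$ for every $m\in M$ and $r\in R$, so every element of the form $mr$ is an additive idempotent; since $MR=M$ by minimality, every $m\in M$ is a finite sum of such $mr$'s, and commutativity and associativity of addition in $M$ give $m+m=m$. Thus $M$ is a finite join semilattice with minimum $0$, and being finite it has a maximum $\infty$. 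Theorem~\ref{primitivegivesdense} then yields that the faithful action map $R\hookrightarrow\mathrm{End}(M)$ realizes $R$ as a dense subsemiring of $\mathrm{End}(M)$.

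The main obstacle I anticipate is the step in (1) $\Rightarrow$ (2) that identifies the additive monoid structure of $M$ with that of a join semilattice with minimum: the hypotheses only give $M$ as a module over the idempotent semiring $R$, and one has to combine idempotence of $R$ with the minimality-forced identity $MR=M$ to get $m+m=m$ for every $m$. Once this is in place, the argument is essentially assembly of Proposition~\ref{simpleimpliesprimitive}, Theorem~\ref{primitivegivesdense}, and the results of~\cite{Zsemiring}.
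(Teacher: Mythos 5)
Your proof is correct and reaches the same conclusion, but the bookkeeping differs from the paper's. The paper closes the loop as a directed cycle $(2)\Rightarrow(3)\Rightarrow(1)\Rightarrow(2)$, citing Zumbr\"agal's Theorem 2.3 for $(2)\Rightarrow(3)$, Zumbr\"agal's Proposition 3.10 (congruence-free implies primitive) for $(3)\Rightarrow(1)$, and Theorem~\ref{primitivegivesdense} for $(1)\Rightarrow(2)$. You instead prove the two biconditionals $(2)\Leftrightarrow(3)$ and $(1)\Leftrightarrow(2)$: for $(2)\Rightarrow(1)$ you invoke the second assertion of Proposition~\ref{simpleimpliesprimitive} (which the paper states but then does not actually use in the corollary's proof), and for $(3)\Rightarrow(2)$ you reach for Zumbr\"agal's full structure theorem for congruence-simple semirings rather than the lighter Proposition 3.10. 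Both routes are valid; the paper's is slightly more economical since Proposition 3.10 is a weaker input than the structure theorem, while yours makes explicit use of Proposition~\ref{simpleimpliesprimitive} and spells out the verification that a faithful irreducible module over a finite idempotent semiring is a finite join semilattice with minimum (finiteness from $M=mR$ with $R$ finite; additive idempotence from $r+r=r$ together with $MR=M$), which the paper compresses into a parenthetical. Once either chain is in place the conclusion follows identically via Theorem~\ref{primitivegivesdense}.
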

\begin{proof}
The implication 2 implies 3 is~\cite[Theorem 2.3]{Zsemiring}, whereas 3 implies 1 is~\cite[Proposition 3.10]{Zsemiring}.  The final implication follows from Theorem~\ref{primitivegivesdense} and the observation that any irreducible module for a finite idempotent semiring must be finite and idempotent (since $M=mR$ for any non-zero element $m\in M$).
\end{proof}

It follows that if $S$ is a finite semigroup, $k$ is a finite idempotent semiring and $M$ is an irreducible $kS$-module, then the span of the image of the representation $\p\colon S\to \mathrm{End}_k(M)$ is dense.
Zumbr\"agal showed~\cite[Proposition 4.9, Remark 4.10]{Zsemiring} that if $M$ is finite, then $\mathrm{End}(M)$ contains no proper dense subsemirings if and only if $M$ is a distributive lattice.  In particular, this applies when $M$ is a finitely generated free $\mathbb B$-module since then $M$ is isomorphic to the power set of its basis.  We summarize this discussion in the next corollary.

\begin{Cor}
Suppose $S$ is a finite semigroup and let $\p\colon S\to \mathrm{End}(M)$ be an irreducible representation where $M$ is a join semilattice with identity.  Then $\p(S)$ spans a dense subsemiring of $\mathrm{End}(M)$.  In particular, if $M$ is a distributive lattice, then $\p(S)$ spans $\mathrm{End}(M)$.  Consequently, given an irreducible representation $\p\colon S\to M_n(\mathbb B)$, the image of $\p$ spans $M_n(\mathbb B)$.
\end{Cor}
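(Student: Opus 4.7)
The plan is to recognize $R := \mathrm{span}_{\mathbb B}(\p(S))$ as an idempotent subsemiring of $\mathrm{End}(M)$ to which Theorem~\ref{primitivegivesdense} applies. First I would check that $R$ is closed under multiplication and not merely addition: a product of two $\mathbb B$-linear combinations of elements of $\p(S)$ reduces, via distributivity of composition over pointwise join in $\mathrm{End}(M)$ together with $\p(s)\p(t)=\p(st)$, to a single sum $\sum_{i,j}\p(s_it_j)$. Hence $R$ is an idempotent subsemiring of the idempotent semiring $\mathrm{End}(M)$, and faithfulness of the action of $R$ on $M$ is immediate from the inclusion $R\subseteq \mathrm{End}(M)$.

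Next I would transfer irreducibility from $\mathbb BS$ to $R$: any $R$-submodule of $M$ is also a $\mathbb BS$-submodule via $\p$, and similarly any $R$-module congruence is a $\mathbb BS$-module congruence, so $M$ is irreducible as an $R$-module. Finiteness of $S$ forces $R$ to be finite (its elements are finite subset-sums from the finite set $\p(S)$), and minimality then gives $M=mR$ for any $0\neq m\in M$; so $M$ itself is finite, and in particular has a maximum. Theorem~\ref{primitivegivesdense} now yields that $R$ is a dense subsemiring of $\mathrm{End}(M)$, which is the first assertion.

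For the ``in particular'' clause I would invoke Zumbr\"agel's result cited in the discussion immediately preceding the corollary: when $M$ is a finite distributive lattice, $\mathrm{End}(M)$ has no proper dense subsemirings, so density upgrades to equality $R=\mathrm{End}(M)$. The final ``consequently'' claim follows by observing that the free $\mathbb B$-module $\mathbb B^n$ is isomorphic to the power set of an $n$-element set, which is a Boolean lattice and hence distributive, together with the identification $\mathrm{End}(\mathbb B^n)\cong M_n(\mathbb B)$. No step presents a serious obstacle; the mildly delicate point is simply noting that finiteness of $S$ forces finiteness of $M$, which in turn guarantees the maximum required to apply Theorem~\ref{primitivegivesdense}.
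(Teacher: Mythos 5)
Your proof is correct and follows the same route the paper intends: the paper proves the corollary by the discussion immediately preceding it, namely applying Theorem~\ref{primitivegivesdense} to the span of $\p(S)$ and then invoking Zumbr\"agel's characterization of when $\mathrm{End}(M)$ has no proper dense subsemirings. You have simply made explicit the small verifications (closure of the span under composition, transfer of irreducibility from $\mathbb BS$ to the span, finiteness of $M$ and hence existence of a maximum) that the paper leaves implicit.
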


The last part of the corollary can also be deduced in a straightforward way from Theorem~\ref{desribeirreducibles}.  The key step is to note that the span of the rows of $C$ is a free $\mathbb B$-module if and only if $C$ has an identity submatrix of the appropriate rank.


\begin{thebibliography}{10}

\bibitem{AMSV}
J.~Almeida, S.~Margolis, B.~Steinberg, and M.~Volkov.
\newblock Representation theory of finite semigroups, semigroup radicals and
  formal language theory.
\newblock {\em Trans. Amer. Math. Soc.}, 361(3):1429--1461, 2009.

\bibitem{berstelperrinreutenauer}
J.~Berstel, D.~Perrin, and C.~Reutenauer.
\newblock {\em Codes and automata}, volume 129 of {\em Encyclopedia of
  Mathematics and its Applications}.
\newblock Cambridge University Press, Cambridge, 2010.

\bibitem{BerstelReutenauer}
J.~Berstel and C.~Reutenauer.
\newblock {\em Rational series and their languages}, volume~12 of {\em EATCS
  Monographs on Theoretical Computer Science}.
\newblock Springer-Verlag, Berlin, 1988.

\bibitem{Clifford2}
A.~H. Clifford.
\newblock Matrix representations of completely simple semigroups.
\newblock {\em Amer. J. Math.}, 64:327--342, 1942.

\bibitem{Clifford1}
A.~H. Clifford.
\newblock Basic representations of completely simple semigroups.
\newblock {\em Amer. J. Math.}, 82:430--434, 1960.

\bibitem{CP}
A.~H. Clifford and G.~B. Preston.
\newblock {\em The algebraic theory of semigroups. {V}ol. {I}}.
\newblock Mathematical Surveys, No. 7. American Mathematical Society,
  Providence, R.I., 1961.

\bibitem{Connescharone}
A.~Connes and C.~Consani.
\newblock Characteristic one, entropy and the absolute point.
\newblock Technical Report arXiv:0911.3537, arXiv, 2009.

\bibitem{F1}
A.~Connes, C.~Consani, and M.~Marcolli.
\newblock Fun with {$\Bbb F_1$}.
\newblock {\em J. Number Theory}, 129(6):1532--1561, 2009.

\bibitem{Conway}
J.~H. Conway.
\newblock {\em Regular algebra and finite machines}.
\newblock Chapman \& Hall, London, 1971.

\bibitem{Develin}
M.~Develin, F.~Santos, and B.~Sturmfels.
\newblock On the rank of a tropical matrix.
\newblock In {\em Combinatorial and computational geometry}, volume~52 of {\em
  Math. Sci. Res. Inst. Publ.}, pages 213--242. Cambridge Univ. Press,
  Cambridge, 2005.

\bibitem{myirreps}
O.~Ganyushkin, V.~Mazorchuk, and B.~Steinberg.
\newblock On the irreducible representations of a finite semigroup.
\newblock {\em Proc. Amer. Math. Soc.}, 137(11):3585--3592, 2009.

\bibitem{Gathmann}
A.~Gathmann.
\newblock Tropical algebraic geometry.
\newblock {\em Jahresber. Deutsch. Math.-Verein.}, 108(1):3--32, 2006.

\bibitem{semirings1}
K.~G{\l}azek.
\newblock {\em A guide to the literature on semirings and their applications in
  mathematics and information sciences}.
\newblock Kluwer Academic Publishers, Dordrecht, 2002.
\newblock With complete bibliography.

\bibitem{Green}
J.~A. Green.
\newblock On the structure of semigroups.
\newblock {\em Ann. of Math. (2)}, 54:163--172, 1951.

\bibitem{Greenpoly}
J.~A. Green.
\newblock {\em Polynomial representations of {${\rm GL}\sb{n}$}}, volume 830 of
  {\em Lecture Notes in Mathematics}.
\newblock Springer-Verlag, Berlin, 1980.

\bibitem{semirings}
U.~Hebisch and H.~J. Weinert.
\newblock {\em Semirings: algebraic theory and applications in computer
  science}, volume~5 of {\em Series in Algebra}.
\newblock World Scientific Publishing Co. Inc., River Edge, NJ, 1998.
\newblock Translated from the 1993 German original.

\bibitem{Itenberg}
I.~Itenberg, G.~Mikhalkin, and E.~Shustin.
\newblock {\em Tropical algebraic geometry}, volume~35 of {\em Oberwolfach
  Seminars}.
\newblock Birkh\"auser Verlag, Basel, second edition, 2009.

\bibitem{semirings2}
M.~Kilp, U.~Knauer, and A.~V. Mikhalev.
\newblock {\em Monoids, acts and categories}, volume~29 of {\em de Gruyter
  Expositions in Mathematics}.
\newblock Walter de Gruyter \& Co., Berlin, 2000.
\newblock With applications to wreath products and graphs, A handbook for
  students and researchers.

\bibitem{Arbib}
K.~Krohn, J.~Rhodes, and B.~Tilson.
\newblock {\em Algebraic theory of machines, languages, and semigroups}.
\newblock Edited by Michael A. Arbib. With a major contribution by Kenneth
  Krohn and John L. Rhodes. Academic Press, New York, 1968.
\newblock Chapters 1, 5--9.

\bibitem{LallePet}
G.~Lallement and M.~Petrich.
\newblock Irreducible matrix representations of finite semigroups.
\newblock {\em Trans. Amer. Math. Soc.}, 139:393--412, 1969.

\bibitem{Litvinov}
G.~L. Litvinov.
\newblock The {M}aslov dequantization, and idempotent and tropical mathematics:
  a brief introduction.
\newblock {\em Zap. Nauchn. Sem. S.-Peterburg. Otdel. Mat. Inst. Steklov.
  (POMI)}, 326(Teor. Predst. Din. Sist. Komb. i Algoritm. Metody. 13):145--182,
  282, 2005.

\bibitem{rrbg}
S.~W. Margolis and B.~Steinberg.
\newblock The quiver of an algebra associated to the {M}antaci-{R}eutenauer
  descent algebra and the homology of regular semigroups.
\newblock {\em Algebr. Represent. Theory}, to appear.

\bibitem{McAlisterCharacter}
D.~B. McAlister.
\newblock Characters of finite semigroups.
\newblock {\em J. Algebra}, 22:183--200, 1972.

\bibitem{Munn2}
W.~D. Munn.
\newblock On semigroup algebras.
\newblock {\em Proc. Cambridge Philos. Soc.}, 51:1--15, 1955.

\bibitem{Munn1}
W.~D. Munn.
\newblock Matrix representations of semigroups.
\newblock {\em Proc. Cambrdige Philos. Soc.}, 53:5--12, 1957.

\bibitem{Polak4}
L.~Pol{\'a}k.
\newblock Syntactic semiring of a language (extended abstract).
\newblock In {\em Mathematical foundations of computer science, 2001
  (Mari{\'a}nsk{\'e} L{\'a}zn\u e)}, volume 2136 of {\em Lecture Notes in
  Comput. Sci.}, pages 611--620. Springer, Berlin, 2001.

\bibitem{Polak2}
L.~Pol{\'a}k.
\newblock Syntactic semiring and language equations.
\newblock In {\em Implementation and application of automata}, volume 2608 of
  {\em Lecture Notes in Comput. Sci.}, pages 182--193. Springer, Berlin, 2003.

\bibitem{Polak1}
L.~Pol{\'a}k.
\newblock Syntactic semiring and universal automaton.
\newblock In {\em Developments in language theory}, volume 2710 of {\em Lecture
  Notes in Comput. Sci.}, pages 411--422. Springer, Berlin, 2003.

\bibitem{Polak3}
L.~Pol{\'a}k.
\newblock A classification of rational languages by semilattice-ordered
  monoids.
\newblock {\em Arch. Math. (Brno)}, 40(4):395--406, 2004.

\bibitem{Poni}
I.~S. Ponizovski{\u\i}.
\newblock On matrix representations of associative systems.
\newblock {\em Mat. Sb. N.S.}, 38(80):241--260, 1956.

\bibitem{Rees}
D.~Rees.
\newblock On semi-groups.
\newblock {\em Proc. Cambridge Philos. Soc.}, 36:387--400, 1940.

\bibitem{Rhodeschar}
J.~Rhodes.
\newblock Characters and complexity of finite semigroups.
\newblock {\em J Combinatorial Theory}, 6:67--85, 1969.

\bibitem{qtheor}
J.~Rhodes and B.~Steinberg.
\newblock {\em The {$q$}-theory of finite semigroups}.
\newblock Springer Monographs in Mathematics. Springer, New York, 2009.

\bibitem{RhodesZalc}
J.~Rhodes and Y.~Zalcstein.
\newblock Elementary representation and character theory of finite semigroups
  and its application.
\newblock In {\em Monoids and semigroups with applications (Berkeley, CA,
  1989)}, pages 334--367. World Sci. Publ., River Edge, NJ, 1991.

\bibitem{Sturmfels}
J.~Richter-Gebert, B.~Sturmfels, and T.~Theobald.
\newblock First steps in tropical geometry.
\newblock In {\em Idempotent mathematics and mathematical physics}, volume 377
  of {\em Contemp. Math.}, pages 289--317. Amer. Math. Soc., Providence, RI,
  2005.

\bibitem{Salomaa}
A.~Salomaa and M.~Soittola.
\newblock {\em Automata-theoretic aspects of formal power series}.
\newblock Springer-Verlag, New York, 1978.
\newblock Texts and Monographs in Computer Science.

\bibitem{Schutzmonomial}
M.-P. Sch{\"u}tzenberger.
\newblock Sur la repr{\'e}sentation monomiale des demi-groupes.
\newblock {\em C. R. Acad. Sci. Paris}, 246:865--867, 1958.

\bibitem{Schutzsemiring}
M.~P. Sch{\"u}tzenberger.
\newblock On the definition of a family of automata.
\newblock {\em Information and Control}, 4:245--270, 1961.

\bibitem{folleyT}
B.~R. Tilson.
\newblock Appendix to ``{A}lgebraic theory of finite semigroups''. {O}n the
  {$p$}-length of {$p$}-solvable semigroups: {P}reliminary results.
\newblock In K.~Folley, editor, {\em Semigroups (Proc. Sympos., Wayne State
  Univ., Detroit, Mich., 1968)}, pages 163--208. Academic Press, New York,
  1969.

\bibitem{Zsemiring}
J.~Zumbr{\"a}gel.
\newblock Classification of finite congruence-simple semirings with zero.
\newblock {\em J. Algebra Appl.}, 7(3):363--377, 2008.

\end{thebibliography}
\def\malce{\mathbin{\hbox{$\bigcirc$\rlap{\kern-7.75pt\raise0,50pt\hbox{${\tt
  m}$}}}}}\def\cprime{$'$} \def\cprime{$'$} \def\cprime{$'$} \def\cprime{$'$}
  \def\cprime{$'$} \def\cprime{$'$} \def\cprime{$'$} \def\cprime{$'$}
  \def\cprime{$'$}

\end{document}